\newtheorem{theorem}{Theorem}
\newtheorem{prop}{Proposition}
\newtheorem{lemma}{Lemma}
\newtheorem{remark}{Remark}
\newtheorem{corr}{Corollary}
\newcommand{\E}{{\mathbb E}}
\newcommand{\RR}{{\mathbb R}}
\newcommand{\Rd}{\mathbb{R}^d}
\newcommand{\Z}{{\mathbb Z}}
\newcommand {\PP}{{\mathbb P}}
\newcommand{\sss}{\scriptscriptstyle}
\newcommand{\Zd}{\mathbb{Z}^d}
\newcommand{\cG}{\mathcal{G}}
\newcommand{\cU}{\mathcal{U}}
\newcommand{\cV}{\mathcal{V}}
\newcommand{\cGbi}{\cG_{\sss{\rm{bi}}}}
\newcommand{\sm}{s_{\sss{\rm{max}}}}
\begin{document}

\captionsetup[subfigure]{justification=centering}

\title{Geometric random intersection graphs with general connection probabilities}\parskip=5pt plus1pt minus1pt \parindent=0pt
\author{Maria Deijfen\thanks{Department of Mathematics, Stockholm University, {\tt mia@math.su.se}.}\and Riccardo Michielan\thanks{Department of Electrical Engineering, Mathematics and Computer Science, University of Twente, {\tt r.michielan@utwente.nl}.}}
\date{June 2023}
\maketitle

\begin{abstract}
\noindent Let $\cV$ and $\cU$ be the point sets of two independent homogeneous Poisson processes on $\Rd$. A graph $\cG_\cV$ with vertex set $\cV$ is constructed by first connecting pairs of points $(v,u)$ with $v\in\cV$ and $u\in\cU$ independently with probability $g(v-u)$, where $g$ is a non-increasing radial function, and then connecting two points $v_1,v_2\in\cV$ if and only if they have a joint neighbor $u\in\cU$. This gives rise to a random intersection graph on $\Rd$. Local properties of the graph, including the degree distribution, are investigated and quantified in terms of the intensities of the underlying Poisson processes and the function $g$. Furthermore, the percolation properties of the graph are characterized and shown to differ depending on whether $g$ has bounded or unbounded support.
\vspace{0.3cm}

\noindent \emph{Keywords:} Random intersection graphs, spatial random graphs, complex networks, AB percolation, degree distribution, percolation phase transition.

\vspace{0.2cm}

\noindent AMS 2010 Subject Classification: 60K35.
\end{abstract}

\section{Introduction}\label{sec:intro}

Random intersection graphs have been popular in network modeling to describe networks arising from bipartite structures. In general, an intersection graph is constructed by assigning each vertex a subset of some auxiliary space, and then connecting two vertices if their subsets intersect; see \cite{Singer}. In the context of network modeling, the auxiliary space typically consists of an additional vertex set, so that the graph is constructed based on two disjoint vertex sets. A bipartite graph is then generated by connecting vertices and auxiliary vertices in some random way and, in the second step, connecting two vertices if there exist an auxiliary vertex to which they are both connected; see e.g.\ \cite{B13,GJ01,KSS99,N03,S04}. In applications, the vertex set can for instance consist of individuals while the auxiliary vertices represent social groups, so that two individuals are connected when they share a social group. Other examples include communication units connected via cell towers, and scientists related through joint papers. However, we will throughout refer to the auxiliary vertices as \emph {groups}.

Most models of random intersection graphs in the literature are non-geometrical. From an empirical perspective however there are many examples where geographical aspects are likely to play a role for the formation of a network of this type. Children are for instance more likely to join sports clubs close to their home, and wireless communication tend to take place through cell towers in the geographical vicinity. Moreover, a notion of proximity can be induced by other features, such as individuals sharing common interests or being part of the same age group, when these features are modeled on a proper space. We hence study a model where both vertices and groups have locations in space. Specifically, we model the two vertex sets as realizations of independent homogeneous Poisson processes on $\RR^d$. A version of such a model has been studied under the name of AB Poisson Boolean model in the special case when a vertex connects to a group if and only if their distance is less than some constant; see \cite{IY12}. Our purpose is to extend this model to more general connection probabilities, corresponding to the generalization of the renowned Poisson Boolean model to the random connection model; see \cite{MR}.

The number of groups shared by two given vertices in our model turns out to be Poisson distributed with a parameter that depends on the distance between the vertices. In non-spatial versions of the random intersection graph, the probability for two given vertices to share more than one group is typically very small, so that the fraction of vertices that share two or more groups is negligible. The behavior of our model is in many cases more realistic, since multiple joint groups are not unlikely for vertices that are geographically close. The downside is that geometry induces heavy dependencies in the edge formation and makes it difficult to characterize the degree distribution explicitly. Indeed, the degree of a vertex is not simply the sum of the sizes of all groups the vertex is a member of, but overlaps between the groups need to be subtracted. However, we provide a characterization of the degree distribution and illustrate its behavior with simulations. We also study the percolation properties of the graph and, when the connection probabilities have unbounded support, we observe a qualitatively different behavior compared to the AB Poisson Boolean model.

\subsection{Description of the model}

Let $\cV$ and $\cU$ denote the point sets of two independent homogeneous Poisson processes on $\RR^d$ with intensities $\lambda$ and $\mu$, respectively. The sets $\cV$ and $\cU$ represent the positions of vertices and groups, respectively. We will often assume that $\cV$ has a point at the origin. This is known as the Palm version of the process and it is well-known that, for a Poisson process, this has the same distribution as the original process with an added point at the origin; see \cite{DVJ}. We think of the origin point as a typical point of the process.

Let $|\cdot|$ denote the Euclidean norm. The connection probabilities will be based on a function $g(x):\RR^d\to[0,1]$, with the properties that $g(x)=g(y)$ if $|x|=|y|$ and $g(x)\leq g(y)$ if $|x|\geq |y|$, that is, $g$ is a non-increasing radial function. Our graph is constructed in two steps as follows:

\begin{itemize}
\item[(i)] Each pair of points $(v,u)$ with $v\in\cV$ and $u\in\cU$ are connected independently with probability $g(v-u)$. This gives rise to a bipartite graph $\mathcal{G}_{\sss \rm{bi}}$ with vertex set $\cV\cup \cU$.
\item[(ii)] Two points $v_1,v_2\in \cV$ are then connected if and only if there exist $u\in\cU$ such that both $v_1$ and $v_2$ are connected to $u$ in $\mathcal{G}_{\sss\rm{bi}}$. The resulting random intersection graph is denoted by $\mathcal{G}_{\cV}$ and has vertex set $\cV$.
\end{itemize}

It is straightforward to see that the $\cU$-points that the origin vertex in $\cV$ is connected to in $\mathcal{G}_{\sss \rm{bi}}$ constitute an inhomogeneous Poisson process with intensity $\mu g(x)$; c.f.\ \cite[Proposition 1.3]{MR}. The expected number of groups that the origin is a member of is hence $\mu\int_{\RR^d}g(x)dx$ and, to make the model non-trivial, we will throughout assume that
\begin{equation}\label{gass}
0<\int_{\RR^d}g(x)dx<\infty.
\end{equation}
The above integral is the $L_1$-norm of $g$ and we write $\int_{\RR^d}g(x)dx=||g||$.

Note that the edges in $\mathcal{G}_{\cV}$ are not independent, since the presence of an edge between two vertices gives information about the presence of a group at a suitable location causing the connection and this information in turn affects the presence of other edges. This causes a substantial dependence between the connections of two vertices located close to each other, while the dependence becomes weaker for vertices far apart. The fact that the edges are not independently present is an important difference compared to the standard random connection model, where each pair of points $x$ and $y$ in a homogeneous Poisson process is connected independently with probability $g(x-y)$; see \cite{MR}.

A special case of the random connection model is when $g(x)=\mathbbm{1}_{|x|<r}$ for some $r\in\RR$. This is the well-known Poisson Boolean model, where a ball with radius $r/2$ is placed at each Poisson point and two points are then connected if their balls intersect, that is, if they are within distance $r$. The percolation properties of the model in the present paper for this particular choice of $g$ was studied in \cite{IY12}. The model was then motivated as a continuum analog of so called AB percolation on lattices, where sites are independently assigned the mark A or B and only edges between sites with different marks are kept; see e.g. \cite{H80,WA87,WP03} and references therein.

\subsection{Basic properties}

First consider the number of groups that the origin vertex share with another vertex $v\in\cV$. As pointed out above, the groups that the origin is connected to form an inhomogeneous Poisson process with intensity $\mu g(x)$. This process can be thinned further to include only those groups to which also $v$ belongs. The probability that $v$ belongs to a group located at $x$ is $g(v-x)$, and the doubly thinned process containing the groups where both the origin and $v$ are members is hence an inhomogeneous Poisson process with intensity $\mu g(x)g(v-x)$.

\begin{prop}\label{p:joint_groups}
The groups shared by vertices located at 0 and $v$ form an inhomogeneous Poisson process on $\RR^d$ with intensity $\mu g(x)g(v-x)$.
\end{prop}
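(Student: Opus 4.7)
The plan is to obtain the claim by applying the position-dependent thinning theorem for Poisson processes twice in succession, exactly along the lines sketched in the paragraph immediately preceding the statement.

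First I would start from the homogeneous Poisson process $\cU$ of intensity $\mu$ on $\RR^d$ and perform a first thinning: retain a point $u\in\cU$ iff the edge $(0,u)$ is present in $\cGbi$. Conditionally on $\cU$, the edges $\{(0,u):u\in\cU\}$ are independent Bernoulli variables with retention probability $g(u)$, so this is an independent thinning with position-dependent probability, and the standard thinning theorem for Poisson processes (as used in \cite[Proposition 1.3]{MR}) yields that the retained set is an inhomogeneous Poisson process on $\RR^d$ with intensity $\mu g(x)$.

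Next I would thin this process once more: retain a point $x$ of the already-thinned process iff the vertex at $v$ is also connected to the group at $x$ in $\cGbi$. By the construction of the bipartite graph the edges $\{(v,u):u\in\cU\}$ are independent Bernoullis with success probabilities $g(v-u)$, jointly independent of the edges involving the origin used in the first step. Hence this second thinning is also independent, position-dependent with retention probability $g(v-x)$, and applying the thinning theorem once more gives an inhomogeneous Poisson process of intensity $\mu g(x) g(v-x)$. The resulting point set is, by definition, precisely the set of groups containing both the origin and the vertex located at $v$, which is what the proposition asserts.

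The only point that needs care, and the one I would flag as the main (minor) obstacle, is the independence of the two thinnings: the bipartite edges incident to $0$ and those incident to $v$ must be jointly independent given $\cV\cup\cU$, so that the second thinning truly is an independent thinning of the output of the first. This is immediate from the construction in step (i) of the model, where all edges of $\cGbi$ are drawn as independent Bernoulli trials, but it is the step that makes the clean double application of the thinning theorem legitimate.
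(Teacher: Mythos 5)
Your argument is correct, but it takes a different route from the paper's formal proof. You apply the position-dependent thinning (marking) theorem twice: first to get the groups of the origin as an inhomogeneous Poisson process of intensity $\mu g(x)$, then to thin this further by the independent events that $v$ also connects, and you correctly identify the joint independence of the edge indicators incident to $0$ and to $v$ as the point that legitimizes the second thinning. This is essentially a formalization of the heuristic the authors themselves sketch in the paragraph preceding the proposition. The paper's own proof in Section 2 is instead a self-contained computation: it fixes a bounded Borel set $B$, notes that the number of groups in $B$ is Poisson with parameter $\mu\ell(B)$ and that their locations are i.i.d.\ uniform on $B$, so that the shared groups in $B$ form a compound sum $\sum_{i=1}^{U(B)}Z_i$ of Bernoulli variables with success probability $\int_B g(x)g(v-x)\,dx/\ell(B)$, and then identifies the generating function of this sum as that of the claimed Poisson distribution. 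The paper's route avoids invoking the thinning theorem as a black box (and handles both thinnings in one stroke, since the event that a group at $x$ is shared has probability $g(x)g(v-x)$ directly); your route is shorter if one is willing to cite the thinning theorem, and you could streamline it further by observing that the two thinnings collapse into a single independent thinning with retention probability $g(x)g(v-x)$. Either way, your proof is sound.
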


For completeness, we include a formal proof of this in Section 2. Let $N_{0,v}$ denote the number of groups shared by 0 and $v$. Then $N_{0,v}$ is Poisson distributed with parameter $\mu\int_{\RR^d}g(x)g(v-x)dx$. We write
\begin{equation}\label{f}
f(v)=\int_{\RR^d}g(x)g(v-x)dx,
\end{equation}
that is, $f$ is the convolution of $g$ with itself, and note that $f(v)\leq \int_{\RR^d}g(x)dx<\infty$ for all $v$. Also note that $f$ inherits the properties of being a non-increasing radial function from $g$. Two vertices are connected in the random intersection graph if they share at least one group, which gives an expression for the connection probability by computing $\PP(N_{0,v}>0)$.

\begin{corr}\label{cor:conn_prob}
The probability that the vertices $0$ and $v$ in $\mathcal{G}_{\cV}$ are connected is given by $p_{0,v}:=1-e^{-\mu f(v)}$.
\end{corr}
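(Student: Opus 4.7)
The plan is to derive the corollary essentially as an immediate consequence of Proposition~\ref{p:joint_groups} together with the definition of edges in $\mathcal{G}_{\cV}$.

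First I would recall from step (ii) of the construction that two vertices $v_1,v_2\in\cV$ are connected in $\mathcal{G}_{\cV}$ if and only if there exists at least one $u\in\cU$ that is connected to both of them in $\cGbi$. Applied to the pair $(0,v)$, this means that $0$ and $v$ are adjacent in $\mathcal{G}_{\cV}$ precisely when the number $N_{0,v}$ of shared groups is at least $1$. Hence
\begin{equation*}
p_{0,v}=\PP(N_{0,v}\geq 1)=1-\PP(N_{0,v}=0).
\end{equation*}

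Next I would invoke Proposition~\ref{p:joint_groups}, which tells us that the set of shared groups is an inhomogeneous Poisson process on $\RR^d$ with intensity $x\mapsto \mu g(x)g(v-x)$. By the standard void probability for Poisson processes, the probability that this process has no points equals the exponential of minus the total intensity, namely
\begin{equation*}
\PP(N_{0,v}=0)=\exp\!\left(-\mu\int_{\RR^d}g(x)g(v-x)dx\right)=e^{-\mu f(v)},
\end{equation*}
where the last equality uses the definition of $f$ in \eqref{f}. Note that the assumption \eqref{gass} guarantees $f(v)<\infty$, so the exponent is finite and the void-probability formula applies without qualification.

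Combining the two displays gives $p_{0,v}=1-e^{-\mu f(v)}$, as claimed. There is no real obstacle here: the argument is a one-line consequence of the void probability formula once Proposition~\ref{p:joint_groups} is in hand, and the only mild care needed is to check that $f(v)$ is finite, which follows directly from the bound $f(v)\leq \|g\|$ noted just after \eqref{f}.
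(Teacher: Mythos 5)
Your proposal is correct and follows exactly the route the paper takes: it deduces from Proposition~\ref{p:joint_groups} that $N_{0,v}$ is Poisson with mean $\mu f(v)$ and then computes $\PP(N_{0,v}>0)=1-e^{-\mu f(v)}$ via the void probability. The added remark that $f(v)\leq\|g\|<\infty$ is a sensible (if implicit in the paper) sanity check, and nothing further is needed.
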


Note that $p_{0,v}=0$ if $f(v)=0$. This occurs if $g$ has bounded support and $v$ does not belong to the support of $g$. If $f(v)>0$, on the other hand, it follows from the expression for $p_{0,v}$ that the connection probability can be made as close to 1 as we wish by increasing $\mu$. Intuitively, if it is possible for $0$ and $v$ to connect, the probability that they actually do so can be made large by increasing the group intensity.

\begin{corr}\label{cor:to1}
If $f(v)>0$, then $p_{0,v}\to 1$ as $\mu\to\infty$.
\end{corr}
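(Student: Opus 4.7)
The plan is essentially to read off the conclusion from the explicit formula supplied by Corollary~\ref{cor:conn_prob}. Since
\[
p_{0,v} = 1 - e^{-\mu f(v)},
\]
the task reduces to analyzing the behavior of the exponential factor $e^{-\mu f(v)}$ as $\mu \to \infty$.

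The key step is to observe that the hypothesis $f(v) > 0$ means the exponent $-\mu f(v)$ tends to $-\infty$ as $\mu \to \infty$. Consequently $e^{-\mu f(v)} \to 0$, and therefore $p_{0,v} \to 1$. No additional structural ingredients are needed: the non-increasing radial nature of $g$, the convolution representation of $f$, and the integrability assumption \eqref{gass} are all already baked into the validity of Corollary~\ref{cor:conn_prob}, which is the only input required.

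There is no real obstacle here; the statement is a direct consequence of the closed-form expression for the connection probability together with elementary properties of the exponential function. The only thing worth emphasizing in the write-up is the contrast with the case $f(v)=0$: if $g$ has bounded support and $v$ lies outside the support of $f$, then $p_{0,v}=0$ identically in $\mu$, so the positivity assumption on $f(v)$ cannot be dropped. This highlights why the corollary is phrased conditionally, and frames its role as a preparation for the discussion of the bounded versus unbounded support regimes that the paper announces it will treat later.
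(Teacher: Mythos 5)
Your proposal is correct and matches the paper's reasoning exactly: the corollary is an immediate consequence of the formula $p_{0,v}=1-e^{-\mu f(v)}$ from Corollary~\ref{cor:conn_prob}, since $f(v)>0$ forces $e^{-\mu f(v)}\to 0$ as $\mu\to\infty$. The paper gives no separate proof beyond this observation, so there is nothing to add.
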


We now turn to the degree distribution in the random intersection graph. Let $D$ denote the degree of the origin vertex in $\mathcal{G}_{\cV}$, that is,
\begin{equation}\label{Dsum}
D=\sum_{v\in\cV}\mathbbm{1}_{\{0\leftrightarrow v\}},
\end{equation}
where $0\leftrightarrow v$ means that 0 and $v$ are connected in $\mathcal{G}_{\cV}$. Since the edges in $\mathcal{G}_{\cV}$ are not independent, it is difficult to characterize the degree distribution precisely, but an upper bound is easily obtained by ignoring overlaps between the groups that the origin is a member of. Below, the variable $N$ represent the number of groups of the origin and $\{X_i\}_{i\geq 1}$ represent their sizes.

\begin{prop}\label{p:degree_bound}
The degree $D$ is stochastically dominated by $\sum_{i=1}^NX_i$, where $\{X_i\}_{i\geq 1}$ are i.i.d.\ Poisson variables with mean $\lambda||g||$ and $N$ is a Poisson variable with mean $\mu||g||$, independent of $\{X_i\}_{i\geq 1}$.
\end{prop}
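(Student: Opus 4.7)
The plan is to compute the exact conditional distribution of $D$ given the set of origin's groups and then stochastically bound its Poisson parameter. Let $\mathcal{N}$ denote the set of $\cU$-points to which the origin is connected in $\cGbi$; as pointed out in the text before Proposition~\ref{p:joint_groups}, $\mathcal{N}$ is an inhomogeneous Poisson process on $\RR^d$ with intensity $\mu g(u)$, so $N=|\mathcal{N}|$ is Poisson with mean $\mu||g||$. The main idea is that conditioning on $\mathcal{N}$ decouples everything else: once the groups of the origin are fixed, each other vertex $v\in\cV\setminus\{0\}$ interacts with the different $u_i\in\mathcal{N}$ only through independent Bernoulli trials.

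Concretely, I will fix a realization $\mathcal{N}=\{u_1,\dots,u_n\}$ and a point $v\in\cV\setminus\{0\}$. Step~(i) of the construction asserts that the edge indicators in $\cGbi$ are mutually independent, so the $n$ indicators that $v$ is joined to each $u_i$ in $\cGbi$ are independent Bernoulli random variables with parameters $g(v-u_i)$, and hence by step~(ii),
\begin{equation*}
\PP(0\leftrightarrow v\mid \mathcal{N})\;=\;p(v)\;:=\;1-\prod_{i=1}^n \bigl(1-g(v-u_i)\bigr).
\end{equation*}
Since these events are conditionally independent across $v$ given $\mathcal{N}$, and since by Slivnyak's theorem $\cV\setminus\{0\}$ is a homogeneous Poisson process of intensity $\lambda$ (independent of $\mathcal{N}$, which is built out of $\cU$ and the $0$--$\cU$ edge indicators), Poisson thinning then gives
\begin{equation*}
D\mid \mathcal{N}\;\sim\;\mathrm{Poisson}\!\Bigl(\lambda\int_{\RR^d} p(v)\,dv\Bigr).
\end{equation*}

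To obtain the stated compound Poisson upper bound, I will apply the elementary inequality $\prod_i(1-a_i)\geq 1-\sum_i a_i$ for $a_i\in[0,1]$, which yields $p(v)\leq \sum_{i=1}^n g(v-u_i)$ and hence $\lambda\int p(v)\,dv\leq n\lambda||g||$. Since Poisson variables are stochastically monotone in their parameter, this gives $D\mid\{N=n\}\leq_{\mathrm{st}}\mathrm{Poisson}(n\lambda||g||)$, and the right-hand side has the same law as $\sum_{i=1}^n X_i$ with $X_i$ i.i.d.\ $\mathrm{Poisson}(\lambda||g||)$. Averaging over $N\sim\mathrm{Poisson}(\mu||g||)$ then produces $D\leq_{\mathrm{st}}\sum_{i=1}^N X_i$. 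The only delicate point is really packaging the multi-group dependence in $\cG_\cV$ into the Slivnyak-plus-thinning identity for $D\mid\mathcal{N}$; once this is in place, the union-type bound on $p(v)$ makes the overlaps across groups work in our favor and the result drops out.
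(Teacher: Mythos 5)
Your argument is correct and is in essence the paper's own proof: both condition on the origin's set of groups $\{u_1,\dots,u_N\}$, observe that the remaining vertices connect to the origin according to a thinning with probability $1-\prod_i(1-g(v-u_i))\leq \sum_i g(v-u_i)$, and conclude by comparison with independent $\mathrm{Poisson}(\lambda||g||)$ counts. The only cosmetic difference is that the paper packages this as a coupling of a superposition of Poisson processes $\mathcal{P}_i$ with the telescoping intensities $\lambda\prod_{j<i}(1-g(x-u_j))\,g(x-u_i)$, each dominated by $\lambda g(x-u_i)$, whereas you identify the conditional law of $D$ given $\mathcal{N}$ as exactly Poisson and invoke monotonicity of the Poisson distribution in its parameter.
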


We give the short proof of Proposition \ref{p:degree_bound} in Section 2. In Section 3, we give some examples of choices of connection functions $g$, and illustrate the corresponding degree distributions by aid of simulations. Note that it follows from Proposition \ref{p:degree_bound} that the degree distribution has exponentially decaying tail. In order to obtain a power law tail, the connection probability has be made inhomogeneous; see Section \ref{sec:further} for further comments on this. Even though the full degree distribution is complicated to characterize, the expected degree can be computed.

\begin{prop}\label{p:edegree}
We have that $\E[D]=\lambda\int_{\RR^d}\left(1-e^{-\mu f(y)}\right)dy$.
\end{prop}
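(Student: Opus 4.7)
The plan is to compute $\E[D]$ by applying the Campbell--Mecke (Slivnyak) formula to the sum in \eqref{Dsum}, and then recognizing the resulting integrand as the pairwise connection probability already computed in Corollary \ref{cor:conn_prob}.

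First I would use Slivnyak's theorem to replace the Palm version of $\cV$ at $0$ with $\cV' \cup \{0\}$, where $\cV'$ is a homogeneous Poisson process on $\RR^d$ of intensity $\lambda$ independent of $\cU$. This turns $D$ into $\sum_{v \in \cV'} \mathbbm{1}_{\{0 \leftrightarrow v\}}$, where the connection is computed in the intersection graph built on $\cV' \cup \{0\}$ and $\cU$. Next, applying the Mecke formula gives
$$\E[D] \;=\; \lambda \int_{\RR^d} \PP\bigl(0 \leftrightarrow v \text{ in } \cG_{\cV' \cup \{0,v\}}\bigr)\,dv.$$

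The second main step is to argue that the probability in the integrand coincides with the pairwise connection probability $p_{0,v}$ appearing in Corollary \ref{cor:conn_prob}. The event $\{0 \leftrightarrow v\}$ holds iff some $u \in \cU$ is joined to both $0$ and $v$ in $\cGbi$. This event is measurable with respect to $\cU$ together with the Bernoulli marks on the pairs $(0,u)$ and $(v,u)$, $u\in\cU$, all of which are independent of the rest of the point process $\cV'$. Hence the probability reduces exactly to $p_{0,v} = 1 - e^{-\mu f(v)}$, and substituting gives the claimed expression.

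The only mildly delicate point is the last independence argument: one must make explicit that adding or removing other vertices of $\cV'$ cannot influence whether $0$ and $v$ share a group, since the bipartite marks are drawn independently on each $(\cV,\cU)$-pair. Once this is noted, the formula drops out from Campbell--Mecke and Corollary \ref{cor:conn_prob} without further calculation; finiteness of the integral when $||g|| < \infty$ is also clear since $1-e^{-\mu f(v)} \le \mu f(v)$ and $\int f = ||g||^2 < \infty$ by \eqref{gass}.
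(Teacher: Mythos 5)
Your proposal is correct and follows essentially the same route as the paper: the paper also reduces $\E[D]$ to $\lambda\int_{\RR^d}p_{0,v}\,dv$ by viewing the neighbors of the origin as a thinning of $\cV$ with retention probability $p_{0,v}$, which is the same computation as your Mecke-formula step. Your explicit remark that the event $\{0\leftrightarrow v\}$ is independent of the remaining points of $\cV$ is a point the paper leaves implicit, but it does not change the argument.
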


\begin{proof}
An edge indicator in the expression \eqref{Dsum} for the degree of the origin is a Bernoulli random variable with success probability $p_{0,v}$. Hence $\E[D]=\sum _{v\in\cV}p_{0,v}$. This sum is the expected number of points in a thinned version of a Poisson process with rate $\lambda$ where a point at $v$ is kept with probability $p_{0,v}$, that is, in an inhomogeneous Poisson process with intensity $\lambda p_{0,v}$. It follows that $\E[D]=\lambda\int_{\RR^d}p_{0,v}dv$, as claimed.
\end{proof}

Since $1+r<e^r$ for $r\in\RR$, we can upper bound
$$
\E[D]\leq \lambda\mu\int_{\RR^d}f(y)dy = \lambda\mu||g||^2.
$$
The expected degree can hence be made small by decreasing either $\lambda$ or $\mu$. It also follows from the expression for $\E[D]$ in Proposition \ref{p:edegree} that the expected degree grows large linearly with the intensity $\lambda$ of the vertex set. Indeed, groups are likely to be large if $\lambda$ is large, and the origin has a positive probability of belonging to at least one group. When $\lambda$ is fixed and $\mu$ increases, on the other hand, the expected degree grows to infinity if and only if $g$ has unbounded support. Indeed, if vertices beyond a certain range are unreachable, the degree stays bounded, but if all vertices are potentially within reach, the expected degree becomes large when the number of groups increases, since the connection probability between any two vertices then comes close to 1; c.f.\ Corollary \ref{cor:to1}. To see this from the expression for $\E[D]$, let $\ell(r)$ denote the volume of a $d$-dimensional ball with radius $r$, and define $r_s=\sup\{|v|:f(v)>s\}$. The integral in the expression for $\E[D]$ can then be bounded as
$$
\ell(r_{1/\mu})(1-e^{-1})\leq \int_{\RR^d}\left(1-e^{-\mu f(v)}\right)dv\leq \ell(r_0),
$$
where $r_0<\infty$ when $g$ has bounded support, while $r_{1/\mu}\to\infty$ as $\mu\to\infty$ when $g$ has unbounded support. The different behavior of the expected degree for large $\mu$ depending on the support of $g$ gives rise to a different phase transition for percolation, as we will see in the next subsection.

\subsection{Percolation phase transition}

We now turn to the question of whether there exists an infinite component in the random intersection graph $\mathcal{G}_{\cV}$. To this end, let $C$ denote the number of vertices in the component of the origin vertex. A straightforward coupling argument shows that the percolation function $\theta(\lambda,\mu,g)=\PP(C=\infty)$ is increasing in $\mu$, and we define
\begin{equation}\label{muc}
\mu_c=\mu_c(\lambda,g)=\sup\{\mu:\theta(\lambda,\mu,g)=0\}.
\end{equation}
A graph is said to percolate if it contains an infinite component, and it follows from ergodicity that $\cG_{\cV}$ percolates with probability 0 or 1 whenever $\mu<\mu_c$ or $\mu>\mu_c$.

We will fix $\lambda$ and investigate $\mu_c(\lambda,g)$. Another option would be to fix $\mu$ and investigate $\lambda_c(\mu,g)$ defined in an analogous way. This however would give rise to a qualitatively similar picture. Specifically, we have that $\mu_c(a,g)=\lambda_c(a,g)$ for all $a >0$. To see this, let $\cG_{\cU}$ be the graph obtained from $\cG_{\sss\rm{bi}}$ in a similar way as $\cG_{\cV}$, but projecting on the point set $\cU$ instead of $\cV$. That is, the sets $\cV$ and $\cU$ switch roles in part (ii) of the construction of the graph, so that two groups are connected if and only if there is a vertex that is a member of both of them. We have the following equivalence result.

\begin{lemma}\label{le:equivalence}
If $||g||<\infty$, then $\{\cG_{\cV}$ percolates $\}$ $\Leftrightarrow$ $\{\cG_{\sss\rm{bi}}$ percolates $\}$ $\Leftrightarrow$ $\{\cG_{\cU}$ percolates $\}$.
\end{lemma}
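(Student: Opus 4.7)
The plan is to show the two equivalences by constructing explicit maps between infinite components in the three graphs, and then use $\|g\|<\infty$ to rule out a degenerate scenario in the projection step. I treat $\cG_\cV$--$\cG_{\rm bi}$; the case $\cG_\cU$--$\cG_{\rm bi}$ follows by swapping the roles of $\cV$ and $\cU$.

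\textbf{Preliminary observation (use of $\|g\|<\infty$).} I first note that, conditional on the Poisson processes, the degree in $\cG_{\rm bi}$ of a point $v\in\cV$ is Poisson with mean $\mu\|g\|<\infty$ (by thinning the $\cU$-process with retention probability $g(v-\cdot)$), and similarly the degree of $u\in\cU$ is Poisson with mean $\lambda\|g\|<\infty$. Partitioning $\RR^d$ into bounded boxes and taking a union bound over the a.s.\ finitely many points in each box shows that, almost surely, \emph{every} vertex of $\cG_{\rm bi}$ has finite degree.

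\textbf{Lift: $\cG_\cV$ percolates $\Rightarrow$ $\cG_{\rm bi}$ percolates.} Suppose $\mathcal{C}\subseteq\cV$ is an infinite connected component of $\cG_\cV$. For each edge $\{v_1,v_2\}$ in this component, pick a witness group $u(v_1,v_2)\in\cU$ that is connected to both $v_1$ and $v_2$ in $\cG_{\rm bi}$. Replacing every $\cG_\cV$-edge by the length-$2$ path $v_1$--$u(v_1,v_2)$--$v_2$ in $\cG_{\rm bi}$ produces a connected subgraph of $\cG_{\rm bi}$ containing the infinite set $\mathcal{C}$, so $\cG_{\rm bi}$ percolates.

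\textbf{Project: $\cG_{\rm bi}$ percolates $\Rightarrow$ $\cG_\cV$ percolates.} Let $\mathcal{K}$ be an infinite component of $\cG_{\rm bi}$ and set $\mathcal{K}_\cV=\mathcal{K}\cap\cV$, $\mathcal{K}_\cU=\mathcal{K}\cap\cU$. Each $u\in\mathcal{K}_\cU$ has at least one neighbor, and that neighbor must lie in $\mathcal{K}_\cV$ since edges in $\cG_{\rm bi}$ connect only $\cV$ to $\cU$; so $\mathcal{K}_\cU\subseteq\bigcup_{v\in\mathcal{K}_\cV}N(v)$, where $N(v)$ is the $\cG_{\rm bi}$-neighborhood of $v$. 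By the preliminary observation, every $|N(v)|$ is finite a.s., so if $\mathcal{K}_\cV$ were finite, then $\mathcal{K}_\cU$ would also be finite, contradicting $|\mathcal{K}|=\infty$. Hence $\mathcal{K}_\cV$ is infinite. Finally, because $\cG_{\rm bi}$ is bipartite, any path in $\mathcal{K}$ between two points of $\cV$ has the form $v_0,u_1,v_1,u_2,\dots,u_k,v_k$; each consecutive pair $v_{i-1},v_i$ shares the group $u_i$ and is therefore adjacent in $\cG_\cV$, so all of $\mathcal{K}_\cV$ lies in a single component of $\cG_\cV$, proving percolation of $\cG_\cV$.

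\textbf{Main obstacle.} The only nontrivial step is the projection direction, where one must exclude the scenario of an infinite $\cG_{\rm bi}$-component that hits $\cV$ only finitely often. This is precisely the place where the hypothesis $\|g\|<\infty$ is used, via the a.s.\ finiteness of $\cG_{\rm bi}$-degrees; without it the equivalence could in principle fail. The remainder of the argument is a purely combinatorial bipartite--projection correspondence together with the symmetry between $\cV$ and $\cU$, which yields the third equivalence with $\cG_\cU$ verbatim.
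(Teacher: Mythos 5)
Your proof is correct and follows essentially the same route as the paper's: use $\|g\|<\infty$ to get almost surely finite degrees in $\cG_{\sss\rm{bi}}$, so an infinite bipartite component must contain infinitely many points of each type and projects to an infinite component via alternating paths, while the converse lift is immediate. You simply spell out in more detail the steps the paper compresses into one paragraph.
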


\begin{proof}
Just note that, when $||g||<\infty$, each vertex (group) is connected to an almost surely finite number of groups (vertices) in $\cGbi$. Hence percolation in $\cGbi$ is equivalent to the existence of an infinite path where groups and vertices alternate. The existence of such a path implies percolation in both $\cG_{\cV}$ and $\cG_{\cU}$. On the other hand, an infinite component in $\cG_{\cV}$ (or $\cG_{\cU}$) implies percolation in $\cGbi$, since all vertices in a given component of $\cG_{\cV}$ (or $\cG_{\cU}$) belongs to the same component in $\cGbi$.
\end{proof}

Note that $\cG_{\cU}$ is equivalent to a graph where the roles of $\lambda$ and $\mu$ are interchanged, so that $\mu$ is the intensity of the vertex set and $\lambda$ of the auxiliary vertex set. Hence it follows from the above lemma that, if there is percolation in $\cG_\cV$ for $\lambda=a$ and $\mu=b$, then there is also percolation for $\lambda=b$ and $\mu=a$. It is thus enough to fix $\lambda$ and vary $\mu$.

The percolation properties of the standard random connection model (with one single vertex set with intensity $\lambda$) is described in \cite{MR}. It is shown that, for $d\geq 2$, there is a non-trivial critical value $\tilde{\lambda}_c(g)\in(0,\infty)$ such that the graph percolates if $\lambda>\tilde{\lambda}_c(g)$ while it does not percolate if $\lambda<\tilde{\lambda}_c(g)$. Write $\tilde{\lambda}_c(r)$ for the critical value in the standard Poisson Boolean model, that is, when $g(x)=\mathbbm{1}_{|x|<r}$ for some $r>0$. In \cite{IY12}, the same model as in the present paper is analyzed in the Poisson Boolean setting. Let $\mu_c(\lambda,r)$ denote the critical value for this model, as defined in \eqref{muc}. It is shown that $\mu_c(\lambda,r)=\infty$ when $\lambda<\tilde{\lambda}_c(2r)$, while $\mu_c(\lambda,r)\in(0,\infty)$ when $\lambda$ is sufficiently large, where $\lambda>\tilde{\lambda}_c(2r)$ suffices for $d=2$.

Our main result states that, when $g$ has unbounded support, there is a non-trivial phase transition in $\mu$ in the random intersection graph $\cG_\cV$ for any fixed value of $\lambda$. When $g$ has bounded support, the qualitative picture is the same as for the Poisson Boolean version. We denote $\sm=\sup\{|x|:g(x)>0\}$.

\begin{theorem}\label{th:main}
Consider the graph $\cG_\cV$ with $||g||<\infty$ in dimension $d\geq 2$.
\begin{itemize}
\item[(a)] If $g$ has unbounded support, then $\mu_c(\lambda,g)\in(0,\infty)$ for any $\lambda>0$.
\item[(b)] If $g$ has bounded support, then $\mu_c(\lambda,g)=\infty$ for $\lambda<\tilde{\lambda}_c(2\sm)$, while $\mu_c(\lambda,g)\in(0,\infty)$ if $\lambda$ is sufficiently large, where $\lambda>\tilde{\lambda}_c(2\sm)$ suffices in $d=2$.
\end{itemize}
\end{theorem}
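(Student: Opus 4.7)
The plan is to prove the four separate statements by combining subgraph comparisons with the standard Poisson Boolean model and a block-renormalisation argument. Throughout I freely switch between $\cG_\cV$ and $\cGbi$ using Lemma \ref{le:equivalence}, and I use the standard monotone coupling in $\mu$ from Poisson thinning, which makes $\mu\mapsto\theta(\lambda,\mu,g)$ non-decreasing.

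The two lower bounds are short. For part (b) with $\lambda<\tilde\lambda_c(2\sm)$: when $g$ has support of radius $\sm$, two vertices sharing a group must lie within distance $2\sm$, so $\cG_\cV$ is a subgraph of the Poisson Boolean model at radius $2\sm$ and intensity $\lambda$, which does not percolate. For part (a) I argue inside $\cGbi$ by a two-type Galton--Watson domination: exploring the origin's component, each vertex has Poisson($\mu\|g\|$) adjacent groups and each group has Poisson($\lambda\|g\|$) adjacent vertices, so the exploration is dominated by a branching process whose vertex-to-vertex mean is $\lambda\mu\|g\|^2$. Extinction is almost sure whenever $\lambda\mu\|g\|^2<1$, giving $\mu_c\geq 1/(\lambda\|g\|^2)>0$.

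The heart of the theorem is the upper bound in part (a). The key observation is that since $g$ is radial non-increasing with unbounded support, $g(x)>0$ for every $x\in\RR^d$, hence $f(v)>0$ for every $v$, and $p_{0,v}\to 1$ as $\mu\to\infty$ at every fixed distance by Corollary \ref{cor:to1}. Given $\lambda>0$, choose $R$ large enough that $\lambda>\tilde\lambda_c(R)$, which is possible since $\tilde\lambda_c(R)=\tilde\lambda_c(1)/R^d\to 0$. Truncate the connection function to $g_R(x)=g(x)\mathbbm{1}_{|x|<R/2}$ and work in the subgraph $\cG_\cV^{g_R}\subseteq\cG_\cV$; the point of truncating is that every edge of $\cG_\cV^{g_R}$ uses only groups within $R/2$ of its endpoints, which localises all dependencies. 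Now tile $\RR^d$ into cubes of side $L<R/(2\sqrt{d})$ and declare a cube $C$ good if it contains a moderate number of $\cV$-points, those points are mutually connected in $\cG_\cV^{g_R}$, and for every neighbouring cube $C'$ at least one pair in $(\cV\cap C)\times(\cV\cap C')$ shares a group. Since all relevant groups lie in a fixed enlargement of $C$, goodness is determined by $(\cV,\cU)$ restricted to that enlargement, and the field of good cubes is $k$-dependent for some fixed $k=k(L,R)$. Using $c:=\inf_{|v|\leq 2\sqrt{d}\,L}f_R(v)>0$ (positive by continuity of the convolution $f_R$ on a compactum in its open support) together with Corollary \ref{cor:to1} and a union bound over pairs, the good-cube probability tends to $1$ as $\mu\to\infty$. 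Liggett--Schonmann--Stacey then gives stochastic domination by supercritical Bernoulli site percolation on $\Z^d$, producing an infinite cluster of good cubes and, consequently, an infinite component of $\cG_\cV^{g_R}\subseteq\cG_\cV$.

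For the upper bound in part (b) the same block-renormalisation is applied directly to $g$, no truncation being needed since $g$ already has bounded support. In $d=2$, the sharp phase transition and RSW-type crossing estimates for the planar Boolean model at $\lambda>\tilde\lambda_c(2\sm)$ provide the finite-size criterion that the renormalisation requires, so $\lambda>\tilde\lambda_c(2\sm)$ is enough; in higher $d$ one assumes $\lambda$ is large enough for such a criterion to hold. The principal obstacle in the whole argument is that edges of $\cG_\cV$ are not independent---a single group produces correlated edges among many vertex pairs---so a naive renormalisation on $\cG_\cV$ has long-range dependencies. Truncating $g$ to $g_R$ is precisely the device that localises these correlations and permits Liggett--Schonmann--Stacey. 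A minor technical point is to secure the uniform lower bound on $f_R$ at the block scale, which follows from continuity of convolutions together with compactness.
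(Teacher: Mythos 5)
Most of your proposal tracks the paper's own argument. The branching-process domination for $\mu_c>0$, the subgraph comparison with the Boolean model at radius $2\sm$ for non-percolation when $\lambda<\tilde{\lambda}_c(2\sm)$, and the block renormalisation plus Liggett--Schonmann--Stacey for percolation at large $\mu$ are all exactly the paper's route; your version differs only cosmetically (a site rather than bond renormalisation, and localising dependencies by truncating $g$ at radius $R/2$ rather than, as the paper does, by insisting that the connecting group lie in a prescribed intermediate cube). One remark: the condition $\lambda>\tilde{\lambda}_c(R)$ you impose when choosing the truncation radius is never used afterwards --- what your renormalisation actually needs is that the block side $L$ can be taken large enough that a block is non-empty with probability close to $1$ while $2\sqrt{d}\,L$ stays inside the support of $f_R$, which is free when $g$ has unbounded support because $R$ is unconstrained. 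This is also exactly why the same scheme needs $\lambda$ large when $g$ has bounded support: the block side is then capped by a multiple of $\sm$, so the vertex-occupation probability can only be boosted through $\lambda$.

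The genuine gap is the sharp claim in $d=2$, that $\lambda>\tilde{\lambda}_c(2\sm)$ already suffices. You dismiss this with ``RSW-type crossing estimates \ldots provide the finite-size criterion'', but that sentence hides the actual difficulty: a crossing of the Boolean model at radius $2\sm$ uses adjacencies between vertices at distances arbitrarily close to $2\sm$, and for such a pair $f(v)\to 0$ as $|v|\uparrow 2\sm$, so the probability that they share a group cannot be pushed to $1$ by increasing $\mu$ --- there is no uniform lower bound on $f$ over the pairs appearing in the crossing. The paper's Lemma \ref{le:d2} resolves this with a two-radius argument: by continuity of $r\mapsto\tilde{\lambda}_c(r)$ one picks $s_1<\sm$ with $\lambda>\tilde{\lambda}_c(2s_1)$ and builds the crossings from the Boolean model at radius $2s_1$; then a second radius $s_2\in(s_1,\sm)$ guarantees that any pair whose $s_1$-balls intersect has a lens $B_x(s_2)\cap B_y(s_2)$ of area at least $b(s_1,s_2)>0$ in which every group connects to both endpoints with probability at least $g(s_2)^2>0$, so that, conditional on at most $k$ vertices in the block, the failure probability is at most $\tfrac{1}{2}k^2 e^{-\mu g(s_2)^2 b(s_1,s_2)}$. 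Without introducing $s_1$ and $s_2$ (and the auxiliary event bounding the number of vertices per block, needed both for the union bound and for the $2$-dependence), your renormalisation in $d=2$ only yields percolation for $\lambda$ sufficiently large, i.e.\ nothing beyond what you already obtain in general dimension.
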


Non-percolation for small values of $\mu$ is established by dominating the exploration of our graph with a subcritical branching process. When $g$ has bounded support, our model is dominated by a Poisson Boolean model with $r=\sm$, so in this case non-percolation is also an immediate consequence of the fact that the Poisson Boolean version does not percolate for small values of $\mu$. Percolation for large values of $\mu$ when $g$ has unbounded support is established by introducing a 1-dependent percolation process, which is shown to percolate for large values of $\mu$ due to Corollary \ref{cor:to1}. The argument can be adapted to show percolation also when $g$ has bounded support, provided $\lambda$ is sufficiently large. Finally, percolation for large values of $\mu$ as soon as $\lambda>\tilde{\lambda}_c(2\sm)$ is obtained by a generalization of the argument in \cite{IY12}.

Below we give some suggestions for further work. The rest of the paper is then organized so that the proofs are collected in Section 2, and some examples and numerical illustrations are given in Section 3.

\subsection{Further work}\label{sec:further}

In this section we list some open problem and possible generalizations of the model.

\textbf{Critical $\lambda$ for $d\geq 3$.} The percolation picture described in Theorem \ref{th:main} leaves open if percolation is possible for any $\lambda>\tilde{\lambda}_c(2\sm)$ in dimension $d\geq 3$ when $g$ has bounded support. The proof in $d=2$ does not extend to higher dimensions and it would be interesting to understand if the statement is still true or if there is some interval $(\tilde{\lambda}_c(2\sm),\bar{\lambda})$ where percolation is not possible.

\textbf{Percolation in one dimension.} Theorem \ref{th:main} covers only $d\geq 2$. When $g$ has bounded support, it is not hard to see that percolation is not possible in $d=1$. The case when $g$ has unbounded support is more subtle. Results for the standard random connection model indicate that the decay of $g$ is important. In particular, if $g(x-y)\sim |x-y|^{-\delta}$, then there is a non-trivial phase transition in the standard random connection model for $\delta\in(1,2)$, while percolation is not possible for $\delta>2$ nor if $g$ decays faster than polynomially. This is shown for long-range percolation on $\Z$ in \cite{NS86,S83} and follows in the Poisson setting from the more general results in \cite{GLM21,GLM23}. It could be worthwhile to analyze the present model in more detail in $d=1$ for $g$ with unbounded support and derive conditions for percolation and non-percolation, respectively.

\textbf{Clustering.} One reason why random intersection graphs have been popular in the non-spatial setting is that they give rise to clustering in the graph, manifested in the presence of a large number of triangles; see e.g.\ \cite{B13,DK09,N03}. Empirical networks arising from social interaction often exhibit clustering, since two people with a joint friend often make friends also with each other. In non-spatial random intersection graphs, clustering arises as a consequence of the fact that, if two vertices both share a group with a third vertex, then there is a non-trivial probability that they all share the same group, which in that case means that they form a triangle. Spatial graphs typically exhibit clustering also without the intersection effect, since connecting vertices based on their distance in itself induces clustering. Therefore, clustering is not a motivating factor for the spatial random intersection graph as much as in the non-spatial case. However, it could still be interesting to analyze the model in the present paper in this respect and compare for instance to the standard random connection model.

\textbf{Inhomogeneous versions.} Empirical networks often exhibit large elements of inhomogeneity manifested for instance in power-law degree distributions. In order to achieve this in the present model, the edge probabilities would have to be made more variable, for instance by assigning random weights to the vertices and let the edge probabilities be determined by a combination of the weights and the distance between the vertices. Spatial graph models of this type without the intersection effect have been extensively studied the last few years; see e.g.\ \cite{BKL19,DHG13,GHMM22,GLM21,H17,Y06}. Inhomogeneous versions of non-spatial random intersection graphs have been studied in \cite{B17,BD13,DK09}. The present model could also be extended to such a setting, and one could then analyze the effect of the weights on the degree distribution and percolation properties.

\section{Proofs}

We first confirm that the number of groups shared by two vertices at 0 and $v$ constitute an inhomogeneous Poisson process, as claimed in Proposition \ref{p:joint_groups}.

\begin{proof}[Proof of Proposition \ref{p:joint_groups}]
Fix a bounded Borel set $B \subset \mathbb{R}^d$, and let $N_{0,v}(B)$ denote the number of groups in $B$ shared by $0$ and $v$. Clearly $N_{0,v}(B)$ and $N_{0,v}(B')$ are independent for disjoint sets $B$ and $B'$. We have to prove that $N_{0,v}(B)$ is Poisson distributed with parameter $\mu \int_B g(x)g(v-x)dx$. Write $U(B)$ for the number of groups in $B$ and note that $U(B)$ is Poisson distributed with parameter $\mu\cdot\ell(B)$, where $\ell(\cdot)$ denotes Lebesgue measure on $\RR^d$. The groups are uniformly distributed over $B$ and the probability that both 0 and $v$ are members of a given group is hence given by
$$
p_{0,v}(B) = \frac{\int_B g(x)g(v-x) dx}{\ell(B)}.
$$
We then have that
$$
N_{0,v}(B) \stackrel{d}{=} \sum_{i = 1}^{U(B)} Z_i,
$$
where $\{Z_i\}$ are i.i.d.\ Bernoulli variables with parameter $p_{0,v}(B)$, independent of $U(B)$. The generating function of $N_{0,v}(B)$ is thus given by
$$
\E\left[s^{N_{0,v}(B)}\right] =  \E\left[(1+p_{0,v}(B)(s-1))^{U(B)}\right] = e^{(s-1)\mu\int_B g(x)g(v-x) dx},
$$
which we recognize as the generating function of the stated Poisson distribution.
\end{proof}

Next, we observe that the degree of a given vertex in $\cG_\cV$ can be dominated as described in Proposition \ref{p:degree_bound}.

\begin{proof}[Proof of Proposition \ref{p:degree_bound}]
Let $N$ denote the number of groups that the origin is connected to. Then $N$ is Poisson distributed with parameter $\mu||g||$. Denote the groups of the origin by $u_1,\ldots,u_N$. The degree of the origin is given by the total number of vertices connected to these groups. We will dominate this set by aid of a superposition of independent inhomogeneous Poisson processes. First consider $u_1$ and let $\mathcal{P}_1$ be an inhomogeneous Poisson process with intensity function $\lambda g(x-u_1)$, representing the vertices connected to $u_1$ (apart from the origin). Then consider $u_2$ and let $\mathcal{P}_2$ be an inhomogeneous Poisson process, independent of $\mathcal{P}_1$ and with intensity function $\lambda (1-g(x-u_1))g(x-u_2)$, representing the vertices that are connected to $u_2$, but not to $u_1$. Similarly, for $i\geq 3$, the process $\mathcal{P}_i$ contains the points connected to $u_i$ but not to $u_1,\ldots,u_{i-1}$. For any $i\geq 1$, the intensity function of $\mathcal{P}_i$ is dominated by $\lambda g(x-u_i)$, and hence each process $\mathcal{P}_i$ can be coupled to an inhomogeneous Poisson process $\bar{\mathcal{P}}_i$ with rate $\lambda g(x-u_i)$ in such a way that the point set of the former is a subset of the latter. Write $X_i$ for the total number of points in $\bar{\mathcal{P}}_i$. Then $X_i$ is Poisson distributed with parameter $\lambda\int_{\RR^d}g(x-u_i)dx=\lambda||g||$, and the proposition is established.
\end{proof}

We now turn to the percolation properties of our graph, and split the proof of Theorem \ref{th:main} into a few lemmas, the first one stating that the graph does not percolate for small values of $\mu$.

\begin{lemma}\label{le:no_perc}
If $||g||<\infty$ and $d\geq 1$, then $\mu_c(\lambda,g)>0$ for any $\lambda>0$.
\end{lemma}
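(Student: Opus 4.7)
The plan is to explore the component of the origin through the bipartite graph $\cGbi$ by a breadth-first procedure, and to dominate this exploration by a subcritical two-type Galton--Watson branching process whose spectral radius is $\sqrt{\lambda\mu}\,||g||$.

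First I would use the two relevant Poisson structures: given a vertex $v$, the groups to which $v$ is connected in $\cGbi$ form an inhomogeneous Poisson process with intensity $\mu g(x-v)$, so their number is Poisson$(\mu||g||)$; symmetrically, given a group $u$, the vertices attached to $u$ in $\cGbi$ form an inhomogeneous Poisson process with intensity $\lambda g(x-u)$, numbering Poisson$(\lambda||g||)$. Starting from the origin, I iterate the alternating rule ``vertex reveals its groups / group reveals its vertices''. At every step I simply discard (rather than subtract) already-revealed points, which can only enlarge the progeny set. This yields a stochastic upper bound on the component of the origin in $\cGbi$ by a two-type Galton--Watson tree in which a vertex-individual has Poisson$(\mu||g||)$ group-children and a group-individual has Poisson$(\lambda||g||)$ vertex-children, all jointly independent.

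The mean offspring matrix
$$M=\begin{pmatrix} 0 & \mu||g|| \\ \lambda||g|| & 0 \end{pmatrix}$$
has spectral radius $\sqrt{\lambda\mu}\,||g||$, so the two-type process is subcritical whenever $\lambda\mu||g||^2<1$. Its total progeny is then almost surely finite, so the cluster of the origin in $\cGbi$ is almost surely finite, and the component of the origin in $\cG_\cV$, being contained in the vertex-projection of the $\cGbi$-cluster, is also almost surely finite. Hence $\theta(\lambda,\mu,g)=0$ for every $\mu<1/(\lambda||g||^2)$, which gives $\mu_c(\lambda,g)\geq 1/(\lambda||g||^2)>0$.

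The main technical point, which I would treat carefully but which is quite standard, is justifying the independence needed to pass to the branching process. When we visit a newly discovered vertex $v$ and want to attach to it an independent Poisson process of intensity $\mu g(\cdot-v)$, some of these ``potential'' groups may already have been revealed earlier in the exploration. This is handled by constructing the exploration on an enlarged probability space where at every newly reached vertex or group we attach a fresh, independent inhomogeneous Poisson process of potential neighbors with the stated intensity; the true descendants form a subset of the potentials, so the tree built in this way dominates the cluster of the origin in $\cGbi$. With this coupling in place, finiteness of the cluster follows from standard two-type Galton--Watson theory, and the argument is dimension-independent, so the result holds for all $d\geq 1$.
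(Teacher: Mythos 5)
Your proof is correct and follows essentially the same route as the paper: the paper dominates the exploration by a single-type branching process with compound offspring $\sum_{i=1}^N X_i$ (Poisson$(\mu||g||)$ groups, each contributing Poisson$(\lambda||g||)$ vertices), which is exactly your two-type process viewed every other generation, and both arguments hinge on the same coupling (discarding already-revealed points and dominating the relevant inhomogeneous intensities by $\mu g(\cdot-v)$ and $\lambda g(\cdot-u)$) and the same subcriticality condition $\lambda\mu||g||^2<1$. The only cosmetic difference is your use of the spectral radius $\sqrt{\lambda\mu}\,||g||$ of the two-type mean matrix in place of the collapsed offspring mean $\lambda\mu||g||^2$.
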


\begin{proof}
Fix $\lambda>0$. We will explore the graph in generations, starting from the origin, and show that this exploration process is almost surely finite when $\mu$ is small. Specifically, let $G_n$ denote the number of vertices at graph distance $n$ from the origin. We claim that the process $\{G_n\}_{n\geq 1}$ is dominated by a branching process with offspring distribution $\sum_{i=1}^NX_i$, where $N$ is a Poisson variable with parameter $\mu||g||$ and $\{X_i\}$ are i.i.d.\ copies of a Poisson random variable $X$ with parameter $\lambda||g||$, independent of $N$. Indeed, it follows from Proposition \ref{p:degree_bound} that $G_1$ is bounded in the claimed way, since $G_1$ is equal to the degree of the origin.

For $n\geq 2$, we iterate the same construction as in the proof of Proposition \ref{p:degree_bound} and extend it to the groups. To be more precise, let $\{v^{\sss(n-1)}_j:j=1,\ldots, G_{n-1}\}$ denote the vertices at graph distance $n-1$ from the origin and write $N_j^{\sss (n-1)}$ for the number of groups that $v^{\sss(n-1)}_j$ is a member of but where no vertex of any previous generation is a member and, for $j\geq 2$, also none of the vertices $v^{\sss(n-1)}_1,\ldots,v^{\sss(n-1)}_{j-1}$ in the same generation is a member. Those groups constitute an inhomogeneous Poisson process, independent of succeeding quantities, with an intensity function that is dominated by $\mu g(x-v^{\sss(n-1)}_j)$, where the bound is obtained by ignoring factors stemming from the exclusion of groups that have already been visited by the process. Hence the process can be coupled to an inhomogeneous Poisson process where the total number of points $\bar{N}_j^{\sss (n-1)}$ is Poisson distributed with parameter $\mu||g||$ in such a way that $N_j^{\sss (n-1)}\leq \bar{N}_j^{\sss (n-1)}$. Note that $\bar{N}_j^{\sss (n-1)}\stackrel{d}{=}N$.

Denote the groups that vertex $v^{\sss(n-1)}_j$ is a member of by $\{u^{\sss(n-1)}_{j,i}:i=1,\ldots, N_j^{\sss (n-1)}\}$ and let $X^{\sss(n-1)}_{j,i}$ be the number of vertices that are connected to the group $u^{\sss(n-1)}_{j,i}$ and that have not been visited before in the process. Those vertices constitute an inhomogeneous Poisson process, independent of previous quantities, with an intensity that is bounded by $\lambda g(x-u^{\sss(n-1)}_{j,i})$. We can hence couple the process to an inhomogeneous Poisson process where the total number of points $\bar{X}^{\sss(n-1)}_{j,i}$ is Poisson distributed with parameter $\mu||g||$ in such a way that $X^{\sss(n-1)}_{j,i}\leq \bar{X}^{\sss(n-1)}_{j,i}$. Note that $\bar{X}^{\sss(n-1)}_{j,i}\stackrel{d}{=}X$.

The contribution of each vertex $v^{\sss(n-1)}_j$ to the next generation is $\sum_{i=1}^{N_j^{\sss (n-1)}}X^{\sss(n-1)}_{j,i}$. Above, we have shown that this can be upper bounded by coupling the variables to independent ones with the same distributions as $N$ and $X$, respectively. It follows that the exploration of the graph can be coupled to a branching process with offspring distribution $\sum_{i=1}^NX_i$ in such a way that the total number of vertices in the component of the origin is bounded by the total progeny in the branching process. The offspring mean is $\lambda\mu ||g||^2$ and, if $\lambda$ is fixed and $||g||<\infty$, this can be made smaller than 1 by picking $\mu$ small. The total progeny of the branching process is then finite with probability 1, and we conclude that the component of the origin is almost surely finite.
\end{proof}

We continue by establishing percolation for large values of $\mu$ when $g$ has unbounded support.

\begin{lemma}\label{le:perc_ubdd}
If $g$ has unbounded support and $||g||<\infty$, then $\mu_c(\lambda,g)<\infty$ for any $\lambda>0$ and $d\geq 2$.
\end{lemma}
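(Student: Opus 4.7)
The plan is to couple $\cG_\cV$ to a supercritical 1-dependent bond percolation on $\Z^d$ via a renormalization argument. The enabling observation is that, since $g$ is non-increasing, radial and has unbounded support, $g(x)>0$ for every $x\in\RR^d$, and in fact $g$ is bounded below by some $g_0>0$ on any fixed ball. In particular $f(v)>0$ for every $v$, so by Corollary \ref{cor:to1} the connection probability $p_{0,v}$ tends to $1$ as $\mu\to\infty$, uniformly over $v$ in a bounded region.

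First, I would fix a scale $L>0$ and partition $\RR^d$ into cubic boxes $B_z=zL+[-L/2,L/2]^d$, $z\in\Z^d$. On each box that contains a $\cV$-point I would designate a canonical representative $v_z^*$, and declare a nearest-neighbor bond $\{z,z'\}$ of $\Z^d$ \emph{open} if both $v_z^*$ and $v_{z'}^*$ exist and there is a point $u\in\cU\cap(B_z\cup B_{z'})$ connected to both $v_z^*$ and $v_{z'}^*$ in $\cGbi$. The restriction that the mediating $u$ lie inside $B_z\cup B_{z'}$ is essential: it makes the open-bond event measurable with respect to the Poisson points and bipartite Bernoulli edges inside $B_z\cup B_{z'}$, so that bonds with disjoint endpoint pairs become independent. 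This yields a 1-dependent bond percolation on $\Z^d$; engineering this 1-dependence correctly is the step I expect to require the most care, since without the geographical restriction on $u$ a single distant $\cU$-point could couple arbitrarily many bond events.

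Next I would lower-bound the marginal probability. The probability that $B_z$ contains a $\cV$-point is $1-e^{-\lambda L^d}$, while conditional on $v_z^*, v_{z'}^*$ the common mediators in $B_z\cup B_{z'}$ form an inhomogeneous Poisson process with intensity at least $\mu g_0^2$ on a set of volume $2L^d$, so the conditional probability of finding such a mediator is at least $1-e^{-2\mu L^d g_0^2}$. Taking $L$ large first, and then $\mu$ large, makes the marginal bond probability as close to $1$ as desired. Finally, by the Liggett--Schonmann--Stacey domination theorem, any 1-dependent bond process on $\Z^d$ ($d\ge 2$) with marginal above a threshold $p^*<1$ percolates almost surely; an infinite path of open bonds then produces an infinite sequence of vertices $v_{z_0}^*, v_{z_1}^*,\ldots$---necessarily distinct since the $B_{z_i}$ are disjoint---that all lie in a single component of $\cG_\cV$, giving $\mu_c(\lambda,g)<\infty$, as claimed.
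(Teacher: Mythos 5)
Your proposal is correct and follows essentially the same route as the paper's proof: a renormalization to a $1$-dependent bond percolation on $\Z^d$ in which the mediating group is confined to a bounded region near the two boxes (the paper uses a separate intermediate cube $C_{z,z'}^m$ rather than $B_z\cup B_{z'}$, a cosmetic difference), the marginal bond probability is pushed above the Liggett--Schonmann--Stacey threshold by taking the box scale large and then $\mu$ large using $g>0$ everywhere, and LSS domination concludes. Your ordering of limits (fix $L$, hence $g_0=g_0(L)>0$, then send $\mu\to\infty$) is handled correctly, so there is nothing to add.
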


The proof uses a result on domination of $k$-dependent bond percolation by product measure, which is an immediate consequence of the general result in \cite{LSS}. Let $E$ denote the set of nearest neighbor edges of the $\Zd$-lattice. A process $\{Y_e\}_{e\in E}$ is $k$-dependent if, for any two sets $E_1,E_2\subset E$ at $l_\infty$-distance at least $k$, the variables $\{Y_e\}_{e\in E_1}$ and $\{Y_e\}_{e\in E_2}$ are independent.

\begin{lemma}[Liggett, Schonmann, Stacey (1997)]\label{le:LSS}
For any $d\geq 2$ and $k\geq 1$, there exists $p_c=p_c(d,k)<1$ such that, for any $k$-dependent process $\{Y_e\}_{e\in E}$ with $\PP(Y_e=1)=1-\PP(Y_e=0)>p_c$, the $1'$s in $\{Y_e\}_{e\in E}$ percolate almost surely.
\end{lemma}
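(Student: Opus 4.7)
My plan is a Peierls-type contour argument that exploits $k$-dependence by extracting a geometrically well-separated subset of any surrounding contour. I would first treat $d=2$ via planar duality; the case $d\geq 3$ reduces to the planar case because restricting the field to a slab $\Z^2\times\{0\}^{d-2}$ preserves both the marginals and $k$-dependence, and percolation in the slab implies percolation in $\Z^d$.

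In $d=2$, if the origin belongs to a finite open cluster, planar duality produces a closed dual cycle $\gamma$ of some length $n\geq 4$ surrounding the origin, and the number of such cycles is bounded by the classical estimate $n\cdot 3^{n-1}$. For a fixed $\gamma$ I would greedily extract a subset $S(\gamma)\subseteq\gamma$ of edges whose pairwise $l_\infty$-distance exceeds $k$: traversing $\gamma$ and keeping an edge whenever it avoids the $l_\infty$-balls of radius $k$ around previously kept edges produces $|S(\gamma)|\geq n/\kappa$ for a constant $\kappa=\kappa(k)$ that counts how many lattice edges any such ball contains. By the $k$-dependence hypothesis the variables $\{Y_e:e\in S(\gamma)\}$ are jointly independent, so
\begin{equation*}
\PP(\gamma\text{ entirely closed})\leq (1-p)^{n/\kappa},
\end{equation*}
and summing against the cycle count yields
\begin{equation*}
\PP(0\text{ in a finite cluster})\leq \sum_{n\geq 4}n\cdot 3^{n-1}(1-p)^{n/\kappa}.
\end{equation*}
The right-hand side is strictly less than $1$ once $p$ is close enough to $1$ that the geometric ratio $3(1-p)^{1/\kappa}$ is less than $1$, and this condition defines the threshold $p_c=p_c(d,k)<1$ in the statement.

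Positive probability of the origin being in an infinite cluster then upgrades to almost sure existence of an infinite cluster via the standard translation-invariance $0$-$1$ argument, which applies to the stationary $k$-dependent fields that arise in the intended application. The principal obstacle is the greedy extraction of an independent, linear-size subset of edges from an arbitrarily shaped dual cycle; this is a purely combinatorial step, but it is where the explicit form of $p_c(d,k)$ enters, through the dependence of $\kappa$ on $k$ and, after the slab reduction, on $d$. A minor secondary point is that the slab reduction should be checked to preserve the $k$-dependence structure, which is immediate since restriction to a sublattice can only decrease the collection of conditioning events.
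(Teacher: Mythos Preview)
The paper does not prove this lemma; it is quoted from \cite{LSS} as a black box. Your Peierls contour argument is a valid and self-contained route to the percolation statement, and the greedy extraction of a well-separated subset of size $n/\kappa$ from a dual cycle works as you describe (each kept edge rules out at most $\kappa$ others, and pairwise separation larger than $k$ does yield mutual independence by an easy induction using the definition of $k$-dependence). The slab reduction to $d=2$ is also fine.

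For comparison, the original Liggett--Schonmann--Stacey argument is different and strictly stronger: they prove that any $k$-dependent $\{0,1\}$-field with marginals above a threshold stochastically dominates a product Bernoulli field with parameter $\rho(p)\to 1$ as $p\to 1$. Almost sure percolation then follows directly from almost sure percolation of the dominated Bernoulli field, with no appeal to a $0$-$1$ law. Your approach is more elementary but delivers only the percolation conclusion, not the domination.

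One small gap: the lemma as stated does not assume translation invariance, so the ergodicity/$0$-$1$ law you invoke at the end is not available in general. You flag this yourself and restrict to the stationary fields arising in the application, which is adequate for this paper. If you want the full statement, you can bypass the $0$-$1$ law entirely: bound $\PP(\text{no infinite cluster})\leq \PP(\text{some closed dual cycle surrounds }[-n,n]^2)\leq \sum_{m\geq 8n} m\,3^{m-1}(1-p)^{m/\kappa}$, which tends to $0$ as $n\to\infty$ whenever $3(1-p)^{1/\kappa}<1$.
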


\begin{proof}[Proof of Lemma \ref{le:perc_ubdd}]
We will define a 1-dependent bond percolation model on $\Zd$ with the property that percolation in this model implies the existence of an infinite component in our graph $\cG_\cV$. The marginal probabilities in the percolation model can be made arbitrarily close to 1 by picking $\mu$ large, which will imply percolation. To define the model, for $z\in\Zd$, let $C_z^m$ denote the cube with side length $m/2$ centered at $mz$ and, for neighboring sites $z$ and $z'$, let $C_{z,z'}^m$ denote the cube with side length $m/2$ centered at $(z+z')/2$, in between $C_z^m$ and $C_{z'}^m$; see Figure \ref{fig:lattice_supercritical}. Note that the maximal distance from a point in $C_z^m$ to a point in $C_{z,z'}^m$ is bounded from above by, for instance, $dm$.

\begin{figure}
    \centering
    \includegraphics[width=0.50\textwidth]{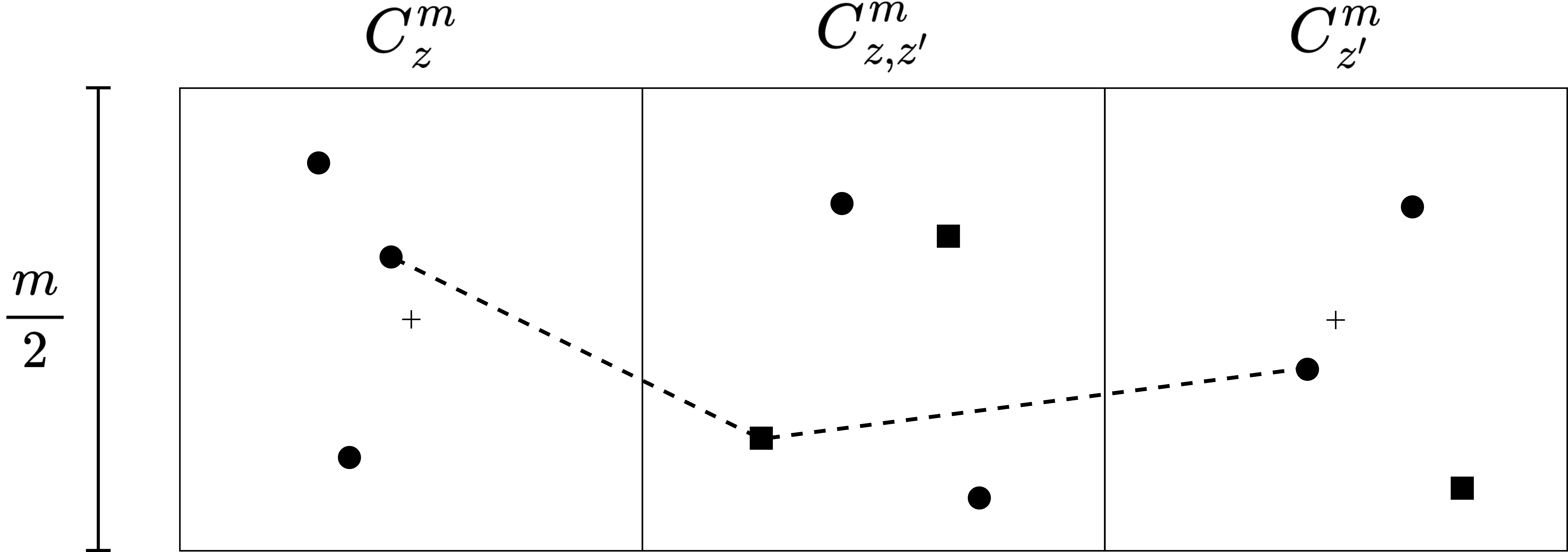}
    \caption{Geometrical construction in the proof of Lemma \ref{le:perc_ubdd}.}
    \label{fig:lattice_supercritical}
\end{figure}

Say that a site $z$ is open if $C_z^m$ contains at least one point of the vertex set $\cV$. We then want to declare an edge between two sites $z$ and $z'$ open if both sites are open and, in addition, there is a connection between a vertex in $C_z^m$ and a vertex in $C_{z'}^m$. This would however lead to long-range dependencies between edges, since we would not have control over the location of the group(s) that causes the vertices to be connected. To circumvent this, we add a restriction on the location of the connecting group. For an open cube $C_z^m$, let $v_z$ denote the vertex closest to the center $zm$ of the cube. An edge between two neighboring sites $z$ and $z'$ is said to be open if both sites are open and there exists a group $u$ in the intermediate cube $C_{z,z'}^m$ such that $v_z$ and $v_{z'}$ are both connected to $u$. Note that percolation of the open edges implies the existence of an infinite component in $\cG_\cV$.

The states of adjacent edges are not independent, since the edges share a vertex. However, the states of edges that do not share a vertex are defined based on Poisson configurations in disjoint regions and therefore independent. The classification of the edges is hence 1-dependent, and it follows from Lemma \ref{le:LSS} that the open edges percolate if the probability that an edge is open is larger than some value $p_c<1$. We have that
\begin{equation}\label{site_open}
\PP(z \mbox{ open})=\PP(C_z^m\cap \cV\neq\emptyset)=1-e^{-\lambda \ell(C_z^m)}.
\end{equation}
This probability can be made larger than $p_c^{1/3}$ by picking $m$ large. Fix such an $m$. For two open sites $z$ and $z'$, by Proposition \ref{p:joint_groups}, the groups where both $v_z$ and $v_{z'}$ are members constitute an inhomogeneous Poisson process with intensity function $\mu g(x-v_z)g(x-v_{z'})$. The number of such groups in the intermediate cube $C_{z,z'}^m$ is hence Poisson distributed with parameter
$$
\mu\int_{C_{z,z'}^m}g(x-v_z)g(x-v_{z'})dx\geq \mu \ell(C_{z,z'}^m)g(dm)^2,
$$
where the bound follows since $|x-v_z|\leq dm$ and $|x-v_{z'}|\leq dm$ for $x\in C_{z,z'}^m$. Hence
\begin{equation}\label{edge_open_cond}
\PP(\mbox{edge $(z,z')$ open}\,|\mbox{$z$ and $z'$ open})\geq 1-e^{-\mu \ell(C_{z,z'}^m)g(dm)^2}.
\end{equation}
If $g$ has unbounded support, then $g(dm)>0$ and thus the above probability can be made larger than $p_c^{1/3}$ by picking $\mu$ large. For $m$ and $\mu$ chosen in this way we have that
\begin{equation}\label{edge_open}
\PP(\mbox{edge $(z,z')$ open})=\PP(\mbox{$z$ open})\cdot\PP(\mbox{$z'$ open})\cdot\PP(\mbox{$(z,z')$ open}\,|\mbox{$z$ and $z'$ open})> p_c.
\end{equation}
If follows that the open edges percolate, which enforces an infinite component in $\cG_\cV$.
\end{proof}

Next we observe that an analogue of Lemma \ref{le:perc_ubdd} holds also when $g$ has bounded support, but with the additional requirement that $\lambda$ must be sufficiently large.

\begin{lemma}\label{le:perc_bdd}
In $d\geq 2$, if $g$ has bounded support and $||g||<\infty$, then  $\mu_c(\lambda,g)<\infty$ when $\lambda$ is sufficiently large.
\end{lemma}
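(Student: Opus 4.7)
The plan is to revisit the renormalisation argument of Lemma \ref{le:perc_ubdd}, using a large $\lambda$ to compensate for the fact that, with $g$ of bounded support, the cube side length $m$ can no longer be chosen arbitrarily large. All the geometric objects $C_z^m$ and $C_{z,z'}^m$, the definitions of open sites and open edges, and the $1$-dependent structure will be reused verbatim.

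In the proof of Lemma \ref{le:perc_ubdd} the key lower bound was $g(dm)^2$ appearing in \eqref{edge_open_cond}; for this quantity to be strictly positive under bounded support we must impose the constraint $dm<\sm$, so first I would \emph{fix} a value of $m$ with $0<m<\sm/d$, which in particular gives $g(dm)>0$. Unlike in the unbounded case, this $m$ is now bounded above by a constant depending only on $g$, so the site-open probability in \eqref{site_open}, equal to $1-e^{-\lambda(m/2)^d}$, can no longer be pushed toward $1$ by enlarging $m$. Instead I would take $\lambda$ large: since $1-e^{-\lambda(m/2)^d}\to 1$ as $\lambda\to\infty$, for all sufficiently large $\lambda$ the site-open probability exceeds $p_c^{1/3}$, where $p_c=p_c(d,1)$ is the constant from Lemma \ref{le:LSS}.

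With $m$ and $\lambda$ now fixed, I would proceed exactly as in Lemma \ref{le:perc_ubdd}: choose $\mu$ so large that $1-e^{-\mu(m/2)^d g(dm)^2}>p_c^{1/3}$, which is possible because $g(dm)>0$. The decomposition \eqref{edge_open} then yields $\PP(\mbox{edge $(z,z')$ open})>p_c$, and since the classification of edges remains $1$-dependent (the construction is unchanged), Lemma \ref{le:LSS} ensures that the open edges percolate almost surely. Because an infinite open path in the renormalised process forces an infinite component of $\cG_\cV$, this establishes $\mu_c(\lambda,g)\leq \mu<\infty$ for every sufficiently large $\lambda$.

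The only genuinely new issue compared with Lemma \ref{le:perc_ubdd} is the ordering of parameter choices: $m$ must be fixed first (dictated by $\sm$), then $\lambda$ large enough to make cubes of that size typically non-empty, and only then $\mu$ large enough to connect nearest neighbour representatives through the intermediate cube. This is the main obstacle, and it also explains why the statement is restricted to large $\lambda$: a quantitative lower threshold on $\lambda$ would come from inverting $1-e^{-\lambda(\sm/(2d))^d}>p_c^{1/3}$, but the lemma as stated only needs qualitative largeness.
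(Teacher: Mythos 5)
Your proposal is correct and follows essentially the same route as the paper: the paper also reuses the construction of Lemma \ref{le:perc_ubdd} with a fixed small $m$ (it takes $m=\sm/(2d)$, so that $g(dm)=g(\sm/2)>0$), pushes the site-open probability above $p_c^{1/3}$ by taking $\lambda$ large, and then the conditional edge probability above $p_c^{1/3}$ by taking $\mu$ large. Your observation about the order of the parameter choices ($m$ first, then $\lambda$, then $\mu$) is exactly the point of the adaptation.
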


\begin{proof}
We adapt the proof of Lemma \ref{le:perc_ubdd} to the case when $g$ has bounded support. Recall that $\sm=\sup\{|x|:g(x)>0\}$. We use the same construction as in the proof Lemma \ref{le:perc_ubdd} but with $m=\sm/(2d)$. The probability \eqref{site_open} that a cube $C_z^m$ contains at least one vertex from $\cV$ can then be made larger than $p_c^{1/3}$ by picking $\lambda$ large, while the conditional edge probability \eqref{edge_open_cond} can be made larger than $p_c^{1/3}$ by picking $\mu$ large, since $g(\sm/2)>0$. It follows as in \eqref{edge_open} that the probability of an edge being open is larger than $p_c$, and we conclude that our graph percolates almost surely.
\end{proof}

Finally, we adapt the proof of \cite[Theorem 2.1]{IY12} to show that, when $d=2$, the model percolates for large values of $\mu $ as soon as $\lambda>\tilde{\lambda}_c(2\sm)$.

\begin{lemma}\label{le:d2}
If $d=2$ and $g$ has bounded support, then $\mu_c(\lambda,g)<\infty$ for $\lambda>\tilde{\lambda}_c(2\sm)$.
\end{lemma}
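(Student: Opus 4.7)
The plan is to upgrade the renormalization of Lemma \ref{le:perc_ubdd} so that, instead of relying on $g(dm) > 0$ for $dm$ much larger than $\sm$ (which fails in the bounded-support case), it transports supercritical two-dimensional Boolean crossings of $\cV$ into $\cG_\cV$. First, using that in $d=2$ the scaling relation $\tilde{\lambda}_c(r) = \tilde{\lambda}_c(1)/r^2$ makes $\tilde{\lambda}_c$ continuous and strictly decreasing in $r$, the hypothesis $\lambda > \tilde{\lambda}_c(2\sm)$ lets me fix $r' \in (0, \sm)$ with $\lambda > \tilde{\lambda}_c(2r')$. Let $H'$ denote the standard Poisson Boolean graph on $\cV$ with connection radius $2r'$; by construction it percolates almost surely. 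Since $g \in L^1 \cap L^\infty$, the convolution $f = g*g$ is continuous on $\RR^2$, and $f(v) > 0$ for $|v| < 2\sm$ because $B(0,\sm) \cap B(v,\sm)$ has positive Lebesgue measure on which both $g(x)$ and $g(v-x)$ are strictly positive. Compactness of $\{|v| \le 2r'\}$ then gives $f^* := \min_{|v| \le 2r'} f(v) > 0$.

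Next I tile $\RR^2$ by boxes $B_z = Lz + [0,L]^2$ with $L$ much larger than $\sm$, and declare a bond $\{z,z'\}$ of $\Z^2$ open when all three of the following hold: (A) the rectangle $R = B_z \cup B_{z'}$ admits a short-side-to-short-side crossing in $H'$ using only vertices of $\cV \cap R$; (B) the number of $H'$-edges with both endpoints in $R$ is at most a chosen $K$; and (C) each of those $H'$-edges is also an edge of $\cG_\cV$. To make crossings of consecutive open bonds concatenate inside $\cG_\cV$, the scheme should be enriched with an auxiliary square-crossing condition at each site $z$ whose edges are also required to lie in $\cG_\cV$; this is the usual 2D patch-up. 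The key estimate, valid conditionally on $\cV$, is that for each $H'$-edge $(v_1, v_2)$ (so $|v_1 - v_2| \le 2r'$) the event $(v_1, v_2) \in \cG_\cV$ is a Bernoulli event with success probability $1 - e^{-\mu f(v_1 - v_2)} \ge 1 - e^{-\mu f^*}$, and is increasing in the joint randomness $(\cU, \text{bipartite edges})$. By the FKG inequality for marked Poisson processes,
$$
\PP\bigl(\, \mathrm{(C)} \text{ holds} \,\bigm|\, \cV,\ \#\{H'\text{-edges in } R\} = n \bigr) \ \ge \ (1 - e^{-\mu f^*})^n,
$$
which tends to $1$ as $\mu \to \infty$ uniformly in $n \le K$.

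Parameters are then tuned in order: pick $L$ large so that $\PP(\mathrm{A})$ is close to $1$ via RSW for supercritical two-dimensional Boolean percolation (see \cite{MR}); pick $K = K(L)$ large so that $\PP(\mathrm{B})$ is close to $1$, since $\#\{H'\text{-edges in } R\}$ is Poisson-like with mean $\mathcal{O}(\lambda^2 L^2)$; finally pick $\mu$ large so that $\PP(\mathrm{C} \mid \mathrm{A}, \mathrm{B})$ is close to $1$ by the FKG bound above. Because (A)--(C) depend only on $\cV$ and $\cU$ restricted to an $\sm$-neighborhood of $R$, the open bonds form a $k$-dependent process for some $k = k(L/\sm)$, and Lemma \ref{le:LSS} yields percolation of open bonds in $\Z^2$; the associated $H'$-crossings, all realized inside $\cG_\cV$, then concatenate into an infinite path in $\cG_\cV$. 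The main obstacle is precisely this concatenation: crossings of adjacent rectangles that are preserved in $\cG_\cV$ only with high (not full) probability must nevertheless be glued into a single infinite cluster, which requires the standard 2D trick of strengthening (A) with matching box crossings so that the resulting path structure is robust under the small residual failure rate.
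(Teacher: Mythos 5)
Your proposal is correct and follows essentially the same renormalization as the paper's proof: find a slack radius $2r'<2\sm$ with $\lambda>\tilde{\lambda}_c(2r')$ (the paper cites continuity of $\tilde{\lambda}_c$, you use the exact scaling relation), demand supercritical Boolean crossings of rectangles together with a cap on the number of relevant vertex pairs, upgrade each short Boolean edge to an edge of $\cG_\cV$ with probability at least $1-e^{-\mu f^*}$ for large $\mu$ (the paper's explicit lens-area bound $g(s_2)^2b(s_1,s_2)$ plays the role of your $f^*$), and conclude via Lemma \ref{le:LSS} plus the standard gluing of perpendicular crossings. The only substantive point you leave as a sketch --- the concatenation of crossings of adjacent open bonds --- is handled in the paper exactly by the auxiliary square crossings you propose, and note that once a bond is declared open your condition (C) already guarantees the crossing lies entirely in $\cG_\cV$, so no ``residual failure rate'' needs to be absorbed.
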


\begin{proof}
In the proof of \cite[Theorem 2.1]{IY12}, the result is proved for the Poisson Boolean version of the model, where two vertices within distance $2\sm$ of each other are for sure connected if there is a group in the intersection of the balls with radius $\sm$ centered at the respective vertices. We need to adapt the argument to take into account that such a connection in our case exists only with a certain probability, which can be made as close to 1 by increasing the number of groups in the intersection. To this end, consider the two dimensional lattice with vertex set $m\mathbb{Z}^2$. Let $e=(z,z')$ be an edge of the lattice and consider the rectangle $R_e$ formed by the two squares $S_z$ and $S_{z'}$ with side length $m$ centered at $z$ and $z'$.

Fix $\lambda > \tilde\lambda_c(2\sm)$. By \cite[Theorem 3.7]{MR}, the critical value $\tilde\lambda_c(r)$ is continuous in $r$. It follows that there exists $s_1 <\sm$ such that $\lambda > \tilde\lambda_c(2 s_1)$. Let $s_2 \in (s_1, s_{max})$. Also write $\tilde{\cG}_\cV(\lambda,r)$ for the graph generated by the standard Poisson Boolean model on $\cV$. Next we define some events associated to the edge $e$:
\begin{equation}\nonumber
\begin{split}
    A_e = & \{ \text{there exist a crossing of $R_e$ along its longest side and crossings of $S_z$ and $S_{z'}$ along} \\
    &\text{ the perpendicular sides by a component of $\tilde{\cG}(\lambda, 2s_1)$}  \}
\end{split}
\end{equation}
\begin{equation}\nonumber
\begin{split}
    B_e = &\{\text{for any two vertices $x,y\in \mathcal{V} \cap R_e$ such that $B_x(s_1) \cap B_y(s_1) \not = \emptyset$ there exists} \\ &\text{ a group in $\mathcal{U} \cap B_x(s_2) \cap B_y(s_2)$ to which both to $x$ and $y$ are connected in $\cGbi$}\}.
\end{split}
\end{equation}

\begin{figure}
\begin{subfigure}{0.45\textwidth}
         \centering
         \includegraphics[width=\textwidth]{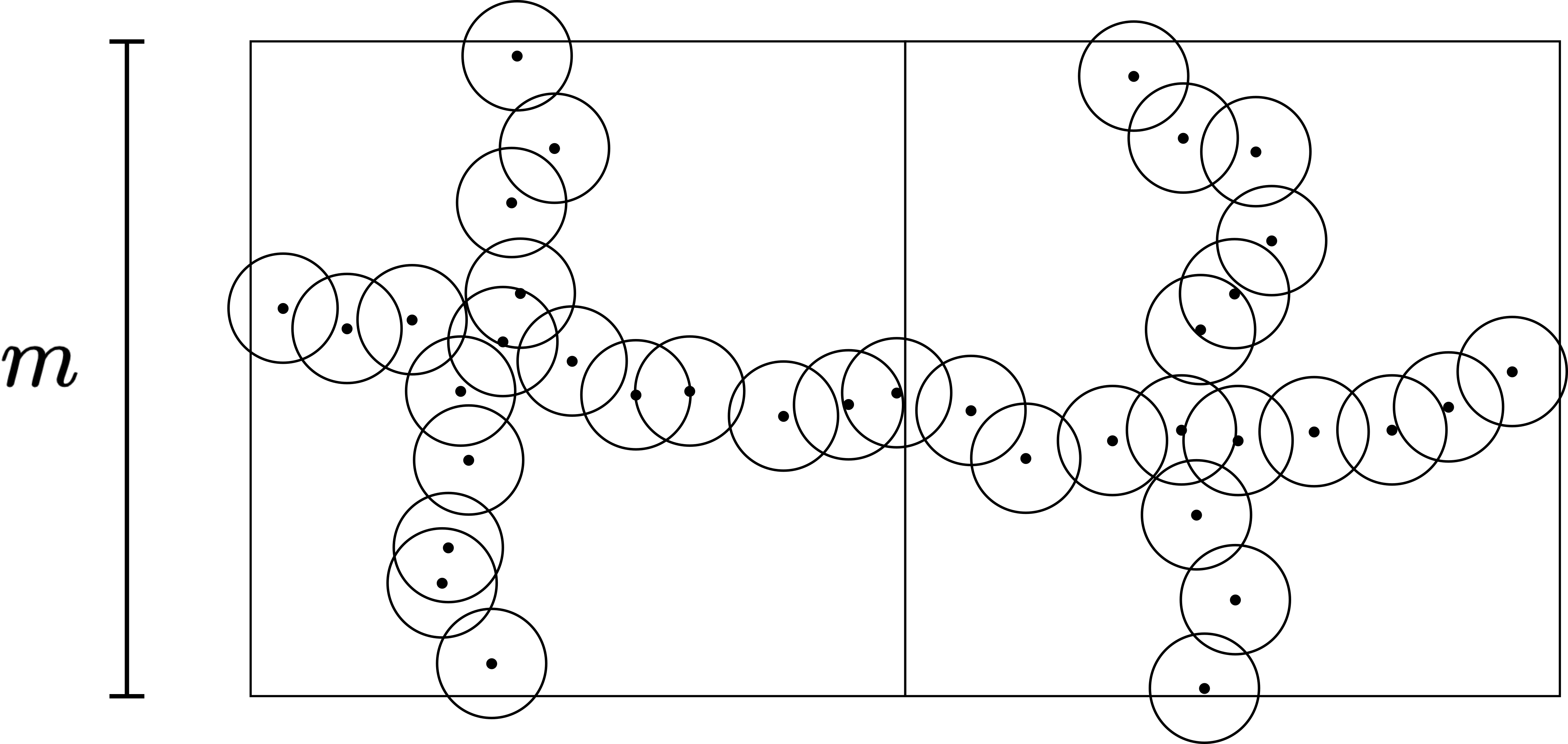}
         \caption{The event $A_e$.}
         \label{fig:crossings}
     \end{subfigure}
     \hfill
     \begin{subfigure}{0.35\textwidth}
         \centering
         \includegraphics[width=\textwidth]{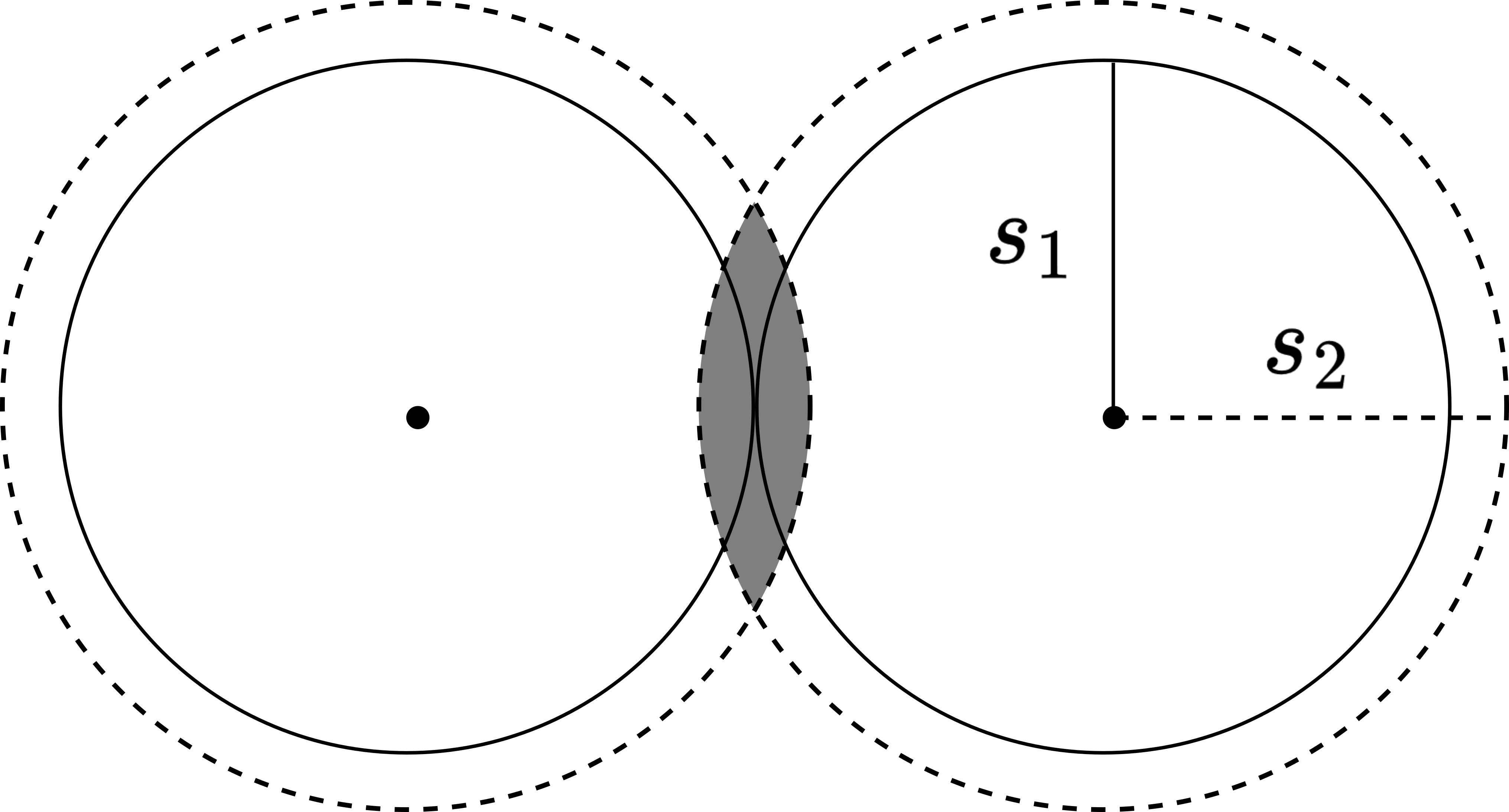}
         \caption{The area $b(s_1,s_2)$}
         \label{fig:balls}
     \end{subfigure}
\caption{Geometrical constructions in the proof of Lemma \ref{le:d2}.}
        \label{fig:geometry}
\end{figure}

See Figure \ref{fig:crossings} for an illustration of the event $A_e$. The edge $e$ is declared open if both $A_e$ and $B_e$ occur. The rest of the proof is divided in two steps: First we prove that the model just defined percolates provided that $\mu$ and $m$ are large enough, then we show that this implies percolation in $\cG_\cV$.

\textbf{Step 1.} Observe that the edges in the lattice form a 2-dependent process. Therefore, from Lemma \ref{le:LSS} we know that percolation in the lattice is achieved whenever $\mathbb{P}(A_e \cap B_e) > p_c$. To prove the latter, we introduce an auxiliary event associated to the edge $e$:
$$
V_e = \{\text{the number of vertices $|\mathcal{V} \cap R_e|$ is smaller than $k$}\}.
$$
Trivially
$$
\PP(A_e \cap B_e) > \PP(A_e \cap B_e \cap V_e) =  \PP(B_e | A_e \cap V_e) \cdot \PP(A_e \cap V_e).
$$

Observe that $\PP(A_e \cap V_e) > \PP(A_e) + \PP(V_e) - 1$. From \cite[Corollary 4.1]{MR} we know that for any $\varepsilon > 0$ there exists $m = m(\varepsilon)$ such that $\PP(A_e) > 1-\varepsilon$, since $\lambda>\tilde{\lambda}_c(2s_1)$ and the graph $\tilde{\cG}(\lambda, 2s_1)$ percolates. Moreover, the number of vertices in the rectangle $|\mathcal{V} \cap R_e|$ is Poisson distributed with mean $\lambda \cdot \ell(R_e) = 2\lambda m^2$. Thus, there exists $k = k(\varepsilon)$ such that $\PP(V_e) > 1-\varepsilon$. Therefore, $\PP(A_e \cap V_e) > 1 - 2\varepsilon$ for any choice of $\varepsilon$, provided that $m$ and $k$ are large enough.

Next, we bound $\mathbb{P}(B_e | A_e \cap V_e)$. Given the event $A_e \cap V_e$, we know that inside $R_e$ there are at most $k^2/2$ pairwise intersections of the balls $\{B_x(s_1)\}_{x \in \mathcal{V} \cap R_e}$. Furthermore, if the balls $B_x(s_1)$ and $B_y(s_1)$ intersect, the area $|B_x(s_2) \cap B_y(s_2)|$ can be bounded from below by $b(s_1,s_2)$, where $b(s_1,s_2)$ is the area of the lens of intersection of two balls with radius $s_2$ whose centers are at distance $2 s_1$; see Figure \ref{fig:balls}. Also observe that, for a group located in $B_x(s_2) \cap B_y(s_2)$, the probability that there is a connection to both $x$ and $y$ (in $\cGbi)$ is lower bounded by $g(s_2)^2$, since the distance to both $x$ and $y$ is at most $s_2$. The number of groups in a given intersection $B_x(s_2) \cap B_y(s_2)$, with $x,y\in\mathcal{V} \cap R_e$, is hence stochastically larger than a Poisson variable with parameter $\mu g(s_2)^2b(s_1,s_2)$, implying that
$$
\PP(B_e | A_e \cap V_e) \geq \left(1 - \exp\left(-\mu g(s_2)^2 b(s_1,s_2)\right)\right)^{k^2/2}.
$$
It follows that $\PP(B_e | A_e \cap V_e)$ can be made as close to 1 as we wish by increasing $\mu$. Summing up, for any $\varepsilon > 0$, there exist $m, \mu, k$ such that $\PP(A_e \cap B_e \cap V_e) > 1 - \varepsilon$. \\

\textbf{Step 2.} Fix $m$ and $\mu$ such that the edge percolation model on $m\Z^2$ defined above percolates. Suppose $e_1, e_2$ are two adjacent edges in the infinite component, that is, suppose the events $A_{e_1}$, $A_{e_2}$, $B_{e_1}$ and $B_{e_2}$ occur. Write $L_{e_1}$ and $L_{e_2}$ for the crossings of the respective rectangles along the longest side. Then, as consequence of the existence of crossings (induced by balls with radius $s_1$) stipulated by $A_{e_1}$ and $A_{e_2}$, we have that either the two crossings $L_{e_1},L_{e_2}$ intersect (if the edges $e_1$ and $e_2$ have different orientations) or they both intersect a perpendicular crossing in the square $R_{e_1} \cap R_{e_2}$ (if $e_1$ and $e_2$ have the same orientation). Now draw balls of radius $s_2$ around each vertex of the crossings $L_{e_1}$, $L_{e_2}$ (and eventually the perpendicular crossing). The events $B_{e_1}$ and $B_{e_2}$ imply that every pairwise intersection of these balls contains a group that is connected to the vertices at the center of the two balls in the graph $\cGbi$. It follows that all vertices in the crossings $L_{e_1}$ and $L_{e_2}$, as well as the perpendicular crossing, belong to the same component in $\cG_\cV$. Therefore, $\cG_\cV$ percolates whenever the edge percolation model on $m \mathbb{Z}^2$ does.
\end{proof}

\begin{remark}
Lemma \ref{le:d2} can be applied also when $g$ has unbounded support to conclude that $\mu_c(\lambda,g)<\infty$ for any $\lambda>0$ in $d=2$ (that is, the conclusion of Lemma \ref{le:perc_ubdd} in $d=2$). Indeed, for a fixed $\lambda>0$, we can take $r<\infty$ such that $\lambda>\tilde{\lambda}_c(2r)$ and Lemma \ref{le:d2} then shows that a model with the connection function $g$ truncated at $r$ gives rise to percolation, which implies percolation in the non-truncated model as well.
\end{remark}

Theorem \ref{th:main} now follows by combining the above lemmas.

\begin{proof}[Proof of Theorem \ref{th:main}]
Part (a) follows from Lemma \ref{le:no_perc} and Lemma \ref{le:perc_ubdd}. As for part (b), the fact that percolation is not possible when $\lambda<\tilde{\lambda}_c(2\sm)$ follows by recalling from Lemma \ref{le:equivalence} and its proof that, if there is an infinite component in $\cG_\cV$, then there is an infinite component in $\cGbi$ where vertices and groups alternate. This means that there is an infinite component in the Poisson Boolean model with vertex set $\cV$ and $r=2\sm$, since two consecutive vertices (with one intermediate group) on the infinite path in $\cGbi$ must be within distance $2\sm$ from each other. But such a component does not exist when $\lambda<\tilde{\lambda}_c(2\sm)$. The fact that $\mu_c(\lambda,g)<\infty$ for any $d\geq 2$ when $\lambda$ is large follows from Lemma \ref{le:perc_bdd}, and Lemma \ref{le:d2} asserts that $\lambda>\tilde{\lambda}_c(2\sm)$ suffices in $d=2$.
\end{proof}

\section{Examples and simulations}

In this section we give some examples of connection functions $g(x)$ and illustrate the behavior of the model by aid of simulations. Recall that the expected number of groups that a vertex is a member of is $\mu ||g||$ and that the expected group size is $\lambda ||g||$. The magnitude of $||g||$ hence plays an important role in determining the sparsity/density of the graph. To provide a fair qualitative comparison of different profile functions, we will therefore sometimes normalize $g$ so that $||g||=1$. Recall the definition \eqref{f} of the self-convolution $f$ and the role of $f$ in the connection probability and the expected degree described in Corollary \ref{cor:conn_prob} and Proposition \ref{p:edegree}. Note that, when $||g||=1$, the function $g$ can be interpreted as the probability density function (pdf) of a random vector $Y$ on $\mathbb{R}^d$ and $f$ is then the pdf of the sum of two independent realizations of $Y$.

We mention three examples of profile functions:
\begin{enumerate}
    \item \textit{Poisson Boolean model.} We first recall the Poisson Boolean model, where
    $$
     g(x) = \mathbbm{1}_{\{|x| < r\}}, \quad r>0.
    $$
    The corresponding random intersection graph $\mathcal{G}_\mathcal{V}$ is the AB Poisson Boolean model studied in \cite{IY12}. Write $B_r(v)$ for the ball with radius $r$ centered at $v$. For any $v \in \mathcal{V}$, we have that
    $$
    g(x)g(v-x) = \begin{cases}
    1 &\text{if $x \in B_r(0) \cap B_r(v)$} \\
    0 &\text{otherwise}.\end{cases}
    $$
    Hence $f(v) = |B_r(0) \cap B_r(v)|$ and the probability that $0$ and $v$ are connected is $p_{0,v} = 1 - \exp\{-\mu |B_r(0) \cap B_r(v)|\}$, which equals 0 when $|v|>2r$.
    \item \textit{Normal distribution.} An example of connection probabilities with unbounded support is provided by
    $$
    g(x) = \frac{1}{(2\pi \sigma^2)^{d/2}} e^{-\frac{|x|^2}{2 \sigma^2}}.
    $$
     Here $g$ is the pdf of a normal random vector with $||g||=1$ and $f$ is hence the pdf of the sum of two independent normal random vectors with pdf $g$, that is,
    $$
    f(v) = \frac{1}{(4\pi \sigma^2)^{d/2}} e^{-\frac{|v|^2}{4 \sigma^2}}.
    $$
    The probability that two vertices $0$ and $v$ are connected is $p_{0,v} = 1 - e^{-\mu f(v)}$, and we note that $p_{0,v} \to 1$ as $\mu \to \infty$ and decays exponentially to 0 when $|v| \to \infty$.


    \item \textit{Power-law decay.} A connection probability function with polynomially decaying tail is given by
    \begin{equation}
    g(x) = 1 \wedge |x|^{-d \alpha}, \quad \alpha > 1.
    \end{equation}
    In the (discrete version of the) standard random connection model, this is known as long-range percolation; see references in Section \ref{sec:further}. The function $f$ is given by
    $$
    f(v) = \int_{\mathbb{R}^d} (1 \wedge |x|^{-d \alpha})(1 \wedge |v-x|^{-d \alpha}) dx.
    $$
    We do not have an explicit expression for $f$ in this case, but it can easily be estimated numerically.
\end{enumerate}

\textbf{Visualization.} Figure \ref{fig:visual} contains visualizations of the random intersection graph $\cG_\cV$ for the above choices of connection probabilities, scaled so that the average degree in the graphs is similar. The model has been simulated on the torus $\mathbb{T}$, represented as a square where opposite sides are identified. Vertices and groups are positioned on the torus according to two independent Poisson processes with intensity $\lambda=2$ and $\mu=1$, respectively. For the Poisson Boolean function and the normal distribution most edges are relatively short, due to the fact that the connection probability $g$ decays exponentially fast with the distance. For the power-law distribution with $\alpha = 3/2$, we observe more long-range connections in the graph.

\textbf{Degree distribution.} Figures \ref{fig:degree_g} and \ref{fig:degree_D0} show the degree distribution for a few different instances of the normal distribution. Again, the model is simulated on a torus. Figure \ref{fig:degree_g} illustrates the importance of $||g||$: In Figure \ref{fig:degree_g}a, we have $||g||=1$, while $||g||=4$ in Figure \ref{fig:degree_g}b, resulting in a degree distribution shifted towards larger degrees. In both cases, the degree distribution appears Poisson-like, apart from the spike at degree 0, most evident in Figure \ref{fig:degree_g}a. This spike is explained in that a vertex that does not belong to any groups is for sure isolated, giving rise to the bound $\PP(D=0)\geq 1 - e^{-\mu ||g||}$, which is larger for smaller values of $||g||$ and $\mu$. Figure \ref{fig:degree_D0} shows the degree distribution for $||g||=1$ with the roles of $\lambda$ and $\mu$ reversed in the two pictures. Indeed, the proportion of isolated vertices is larger for the smaller value of $\mu$ in Figure \ref{fig:degree_D0}a.

\textbf{Percolation properties.} Next we illustrate the critical value $\mu_c$ as a function of $\lambda$ and $g$. We simulate $\cG_\cV$ on a torus and compute the size of the largest component of $\cG_\cV$, expressed as a proportion of the total number of vertices. Large values of this proportion indicate percolation, whereas small values indicate no percolation. When we vary $\lambda$ and $\mu$, the critical value $\mu_c$ is approximated by the curve separating the two regions in the $\lambda-\mu$ plane. In Figure \ref{fig:perc_standard}, the function $g$ is the standard normal distribution on $\mathbb{R}^2$ and $\lambda$ and $\mu$ vary between $0$ and $4$. We note that the two percolation phases are clearly visible and observe that $\mu_c(\lambda)$ looks symmetric with respect to the bisector of the plane, which is consistent with the exchangeability of $\lambda$ and $\mu$ from Lemma \ref{le:equivalence}.

According to Theorem \ref{th:main}, the critical value $\mu_c$ is always finite if $g$ has unbounded support, whereas $\mu_c = \infty$ for small values of $\lambda$ if $g$ has bounded support. This is reflected in Figure \ref{fig:perc_support}. Indeed, when $g$ is the standard normal distribution on $\mathbb{R}^2$, we see that percolation seems achievable also for small values of $\lambda$ by increasing $\mu$ sufficiently, while in the Poisson Boolean, there is a value of $\lambda$ below which percolation seems unfeasible.

\begin{figure}[p]
     \centering
     \begin{subfigure}[b]{0.48\textwidth}
         \centering
         \includegraphics[width=\textwidth]{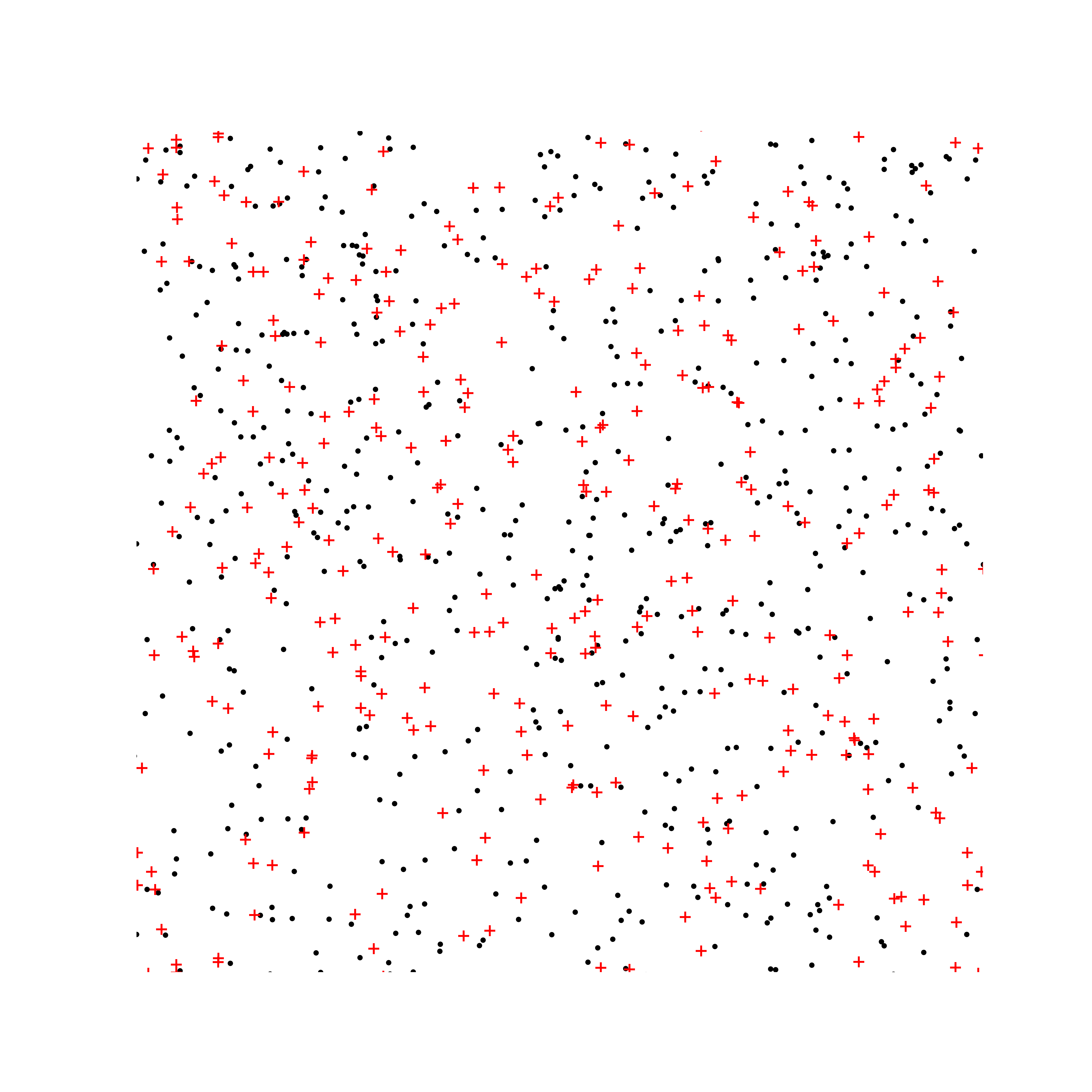}
         \caption{$\mathcal{V}$ and $\mathcal{U}$}
         \label{fig:a}
     \end{subfigure}
     \hfill
     \begin{subfigure}[b]{0.48\textwidth}
         \centering
         \includegraphics[width=\textwidth]{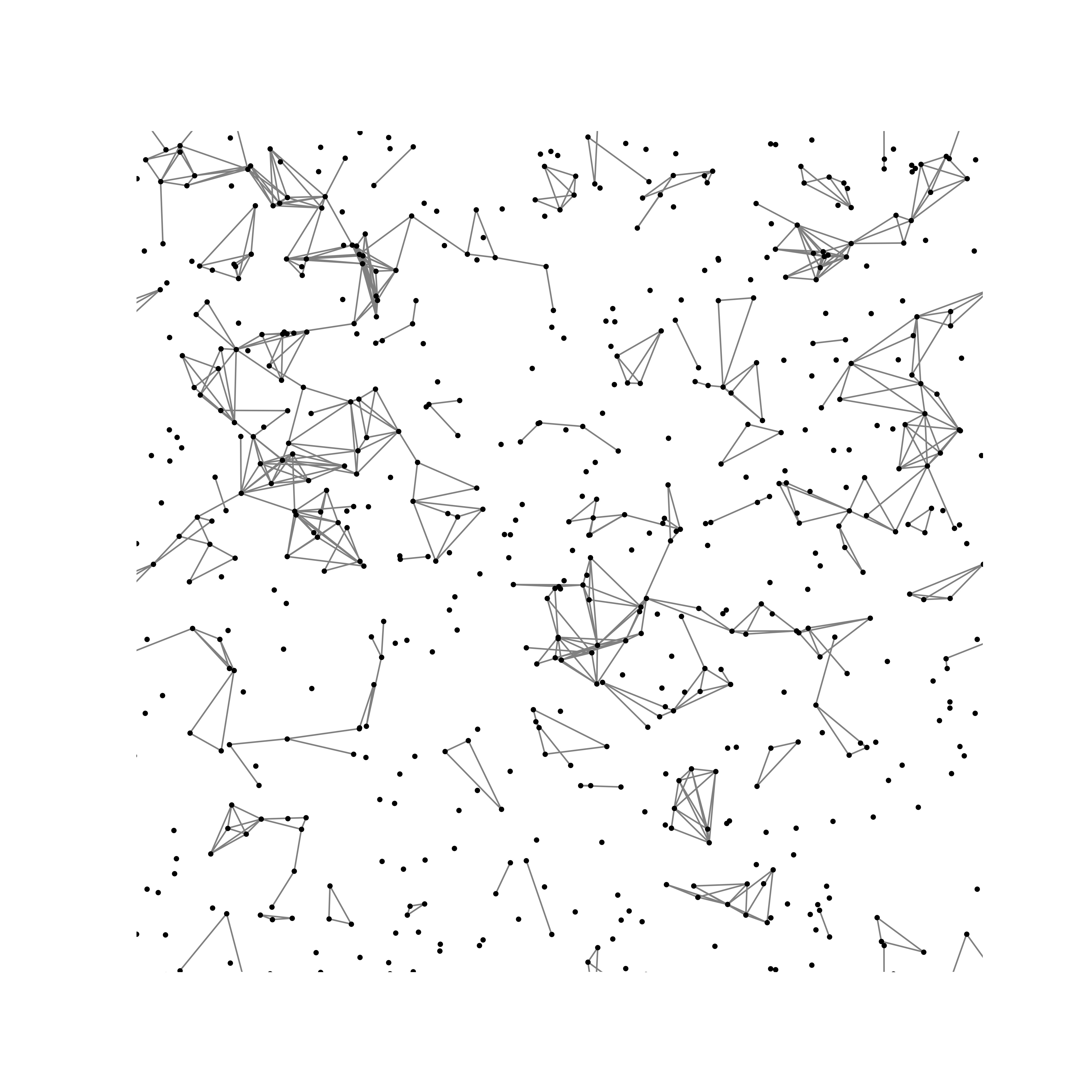}
         \caption{$g(x) \propto \mathbbm{1}_{\{|x| < 1\}}$}
         \label{fig:b}
     \end{subfigure}
     \\
     \vspace{.5cm}
     \begin{subfigure}[b]{0.48\textwidth}
         \centering
         \includegraphics[width=\textwidth]{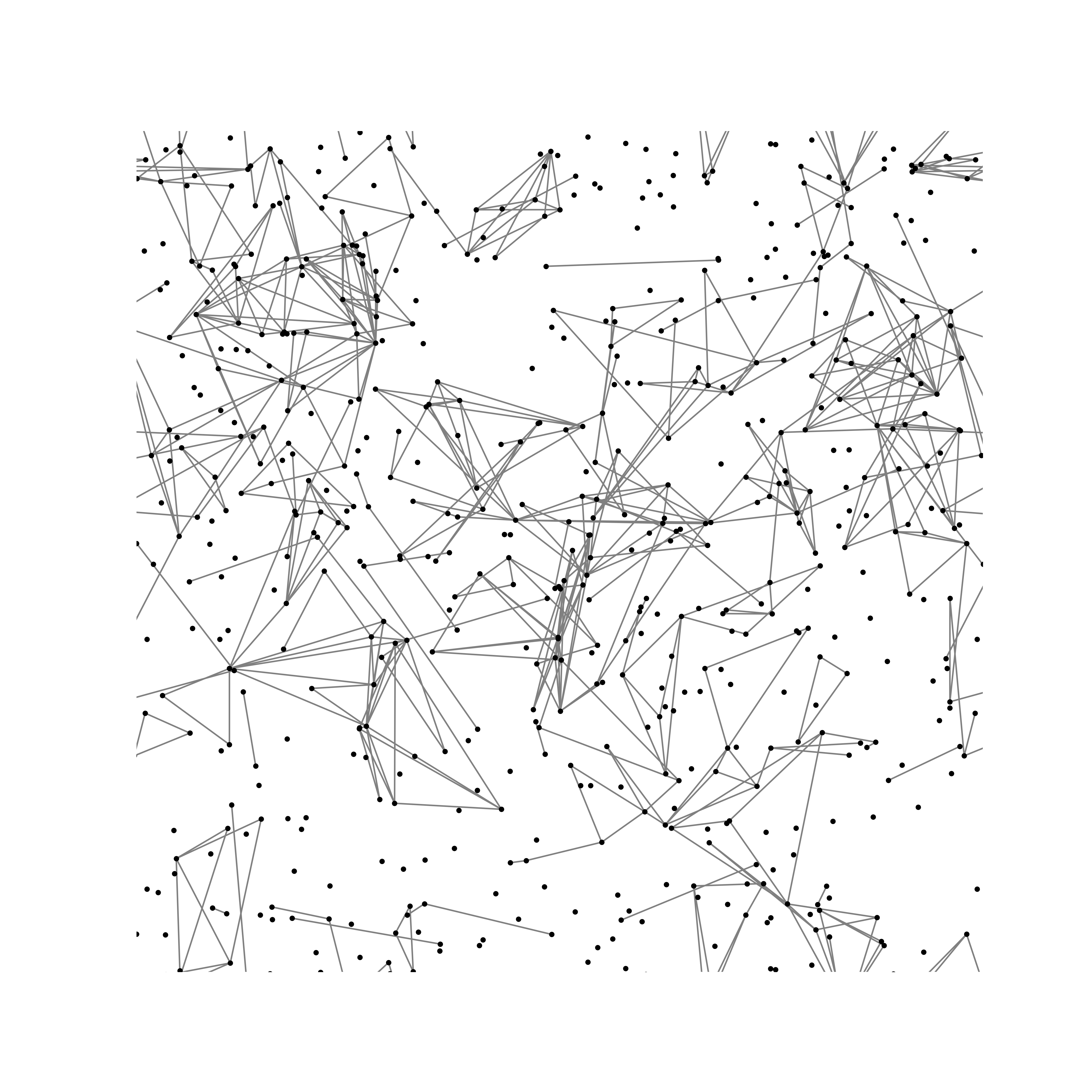}
         \caption{$g(x) \propto e^{-\frac{|x|^2}{2}}$}
         \label{fig:c}
     \end{subfigure}
     \hfill
     \begin{subfigure}[b]{0.48\textwidth}
         \centering
         \includegraphics[width=\textwidth]{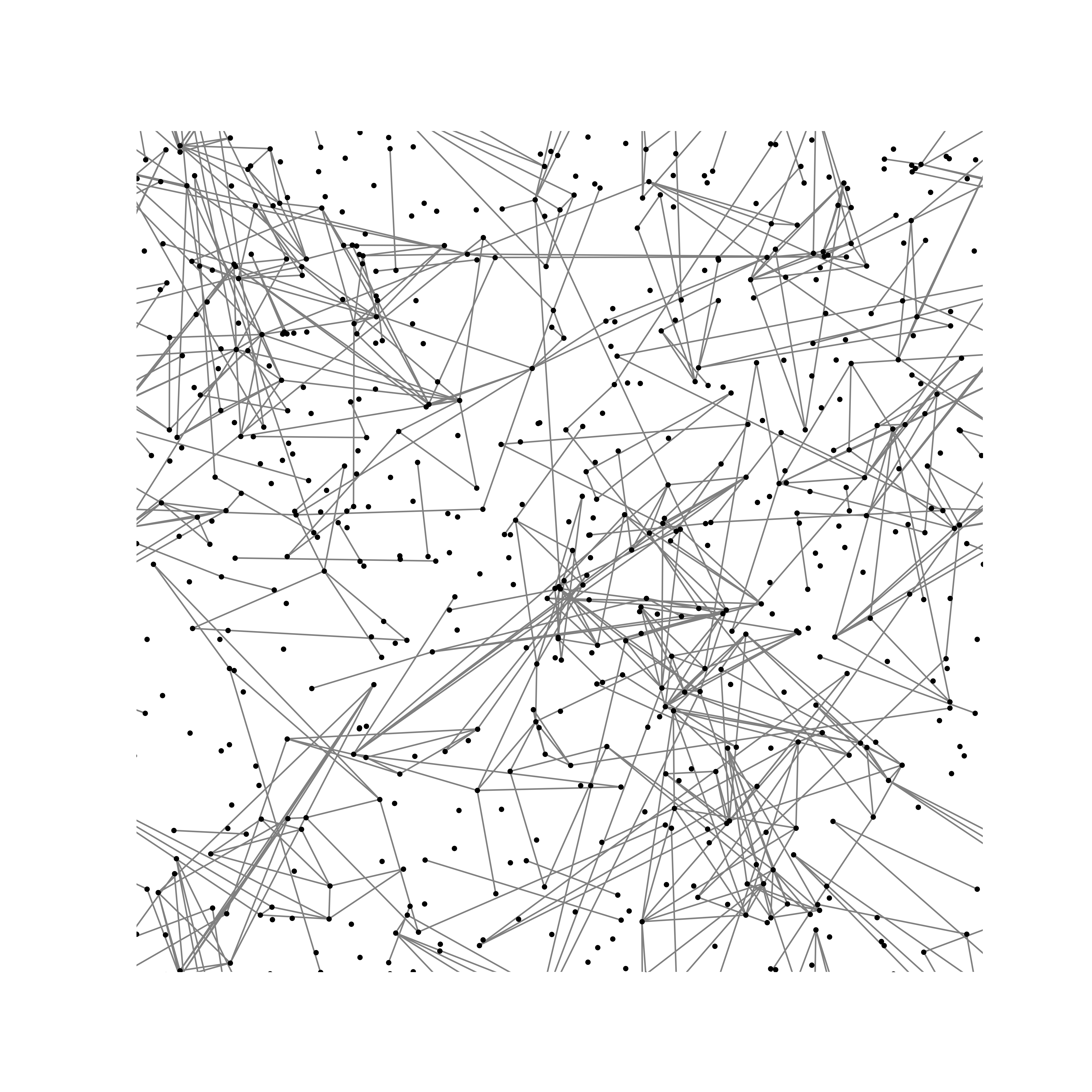}
         \caption{$g(x) \propto 1 \wedge |x|^{-3}$}
         \label{fig:d}
     \end{subfigure}
        \caption{Visualization of $\mathcal{G}_{\mathcal{V}}$ for different choices of connection probabilities. The vertices and groups are sampled with intensities $\lambda = 2, \mu = 1$ on a torus of size $3 \times 10^2$. Fig. \ref{fig:a} shows the positions of vertices (black dots) and groups (red crosses). Fig. \ref{fig:b}, \ref{fig:c}, \ref{fig:d} then shows the graph $\cG_\cV$ for the indicated connection probabilities.}
        \label{fig:visual}
\end{figure}

\begin{figure}[p]
     \begin{subfigure}[b]{0.45\textwidth}
         \centering
         \includegraphics[width=\textwidth]{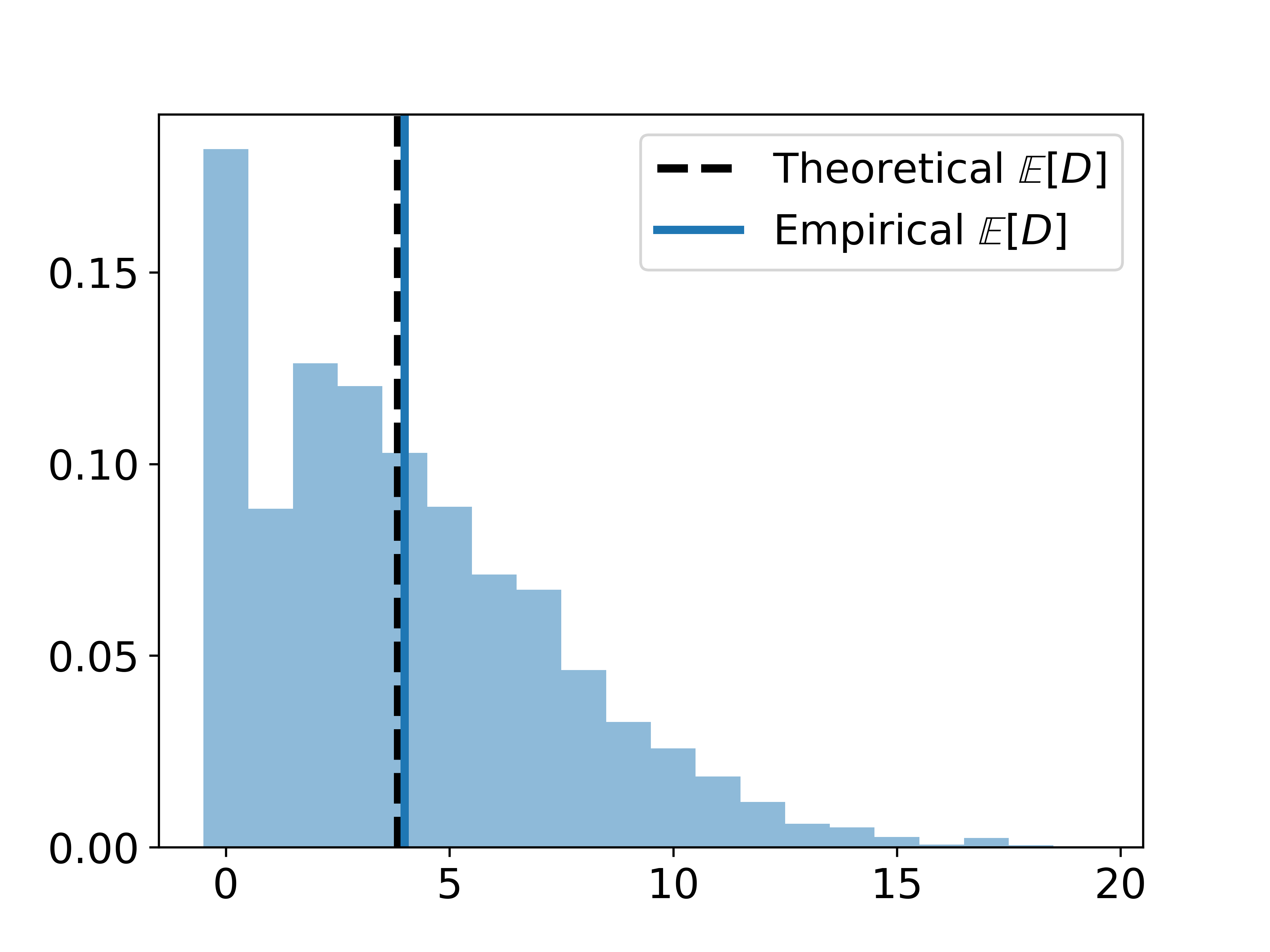}
         \caption{$g(x) \propto e^{-|x|^2/2}, ||g|| = 1$ \\ $\lambda=2, \mu=2$}
         \label{fig:degree_g1}
     \end{subfigure}
     \hfill
     \begin{subfigure}[b]{0.45\textwidth}
         \centering
         \includegraphics[width=\textwidth]{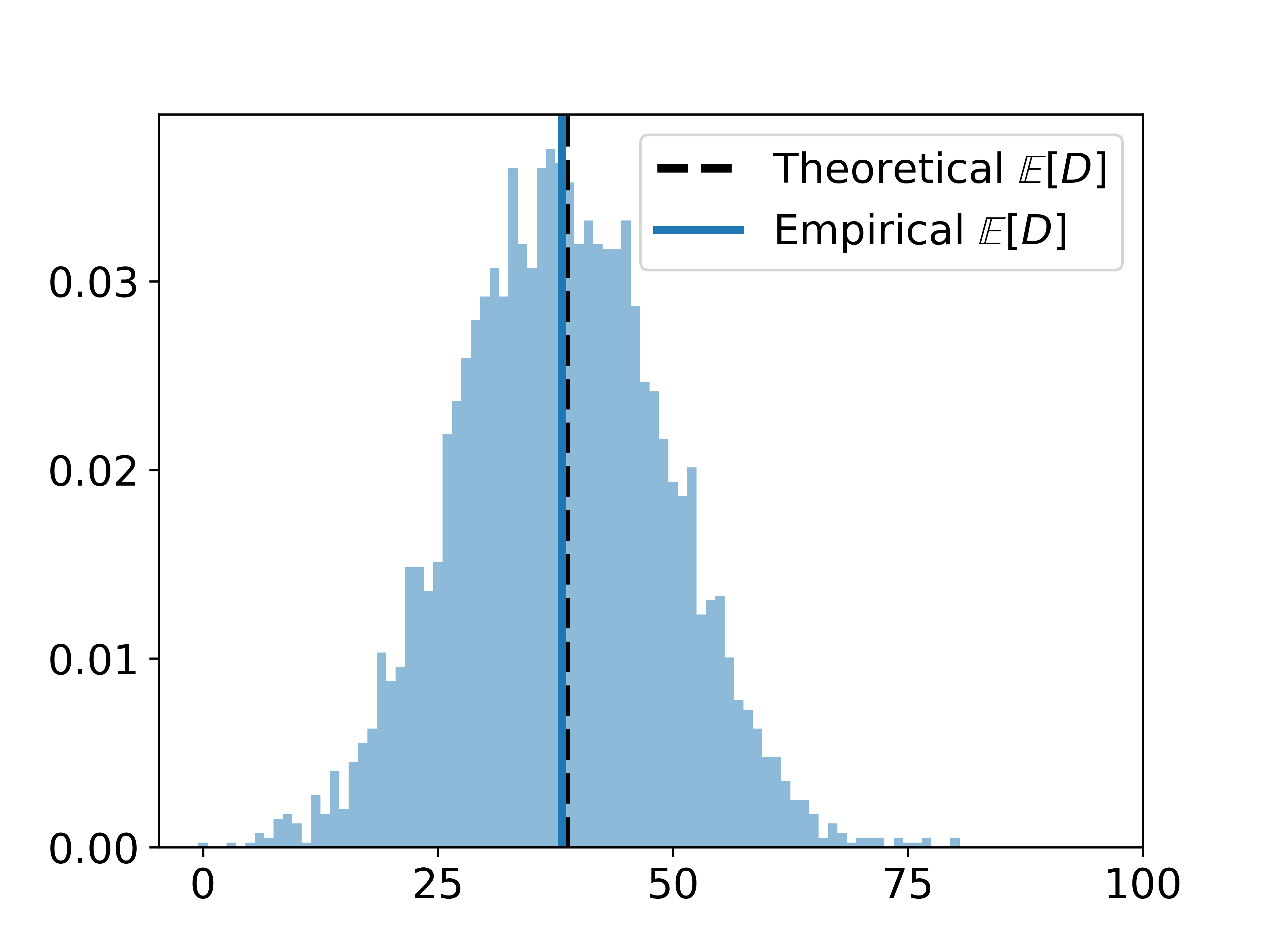}
         \caption{$g(x) \propto e^{-|x|^2/2}, ||g|| = 4$ \\ $\lambda=2, \mu=2$}
         \label{fig:degree_g4}
     \end{subfigure}
     \caption{Degree distribution of $\cG_\cV$ sampled on a torus of size $2 \times 10^3$. The solid line is the value of the empirical expected degree, whereas the dotted line is the value of the theoretical expected degree numerically computed from the expression in Proposition \ref{p:edegree}. The intensities $\lambda$ and $\mu$ are the same in Fig. \ref{fig:degree_g1} and \ref{fig:degree_g4}, but the value of $||g||$ is different.}
        \label{fig:degree_g}
\end{figure}

\begin{figure}[p]
     \begin{subfigure}[b]{0.45\textwidth}
         \centering
         \includegraphics[width=\textwidth]{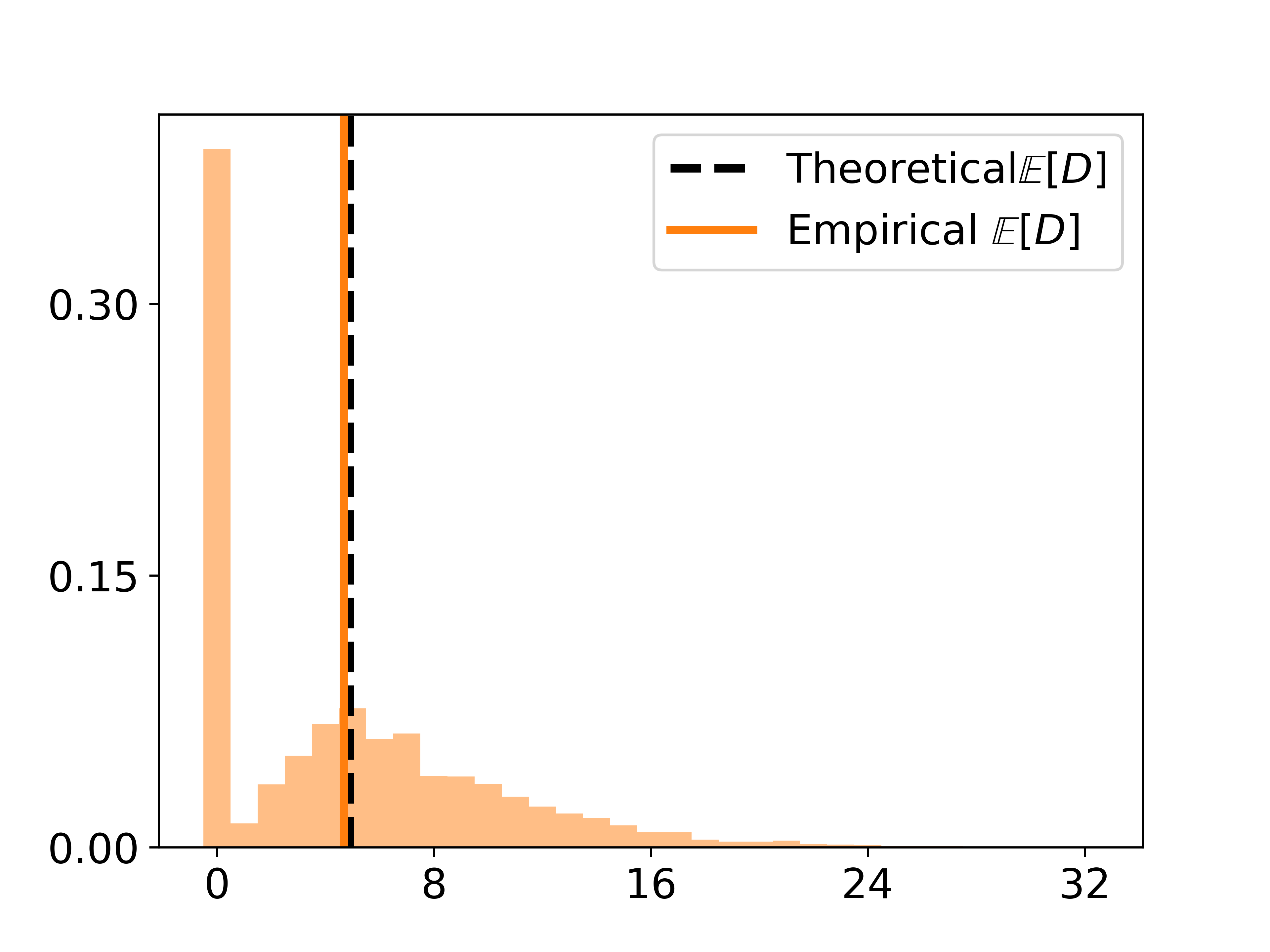}
         \caption{$g(x) \propto e^{-|x|^2/2}, ||g|| = 1$ \\ $\lambda=5, \mu=1$}
         \label{fig:degree_l5m1}
     \end{subfigure}
     \hfill
     \begin{subfigure}[b]{0.45\textwidth}
         \centering
         \includegraphics[width=\textwidth]{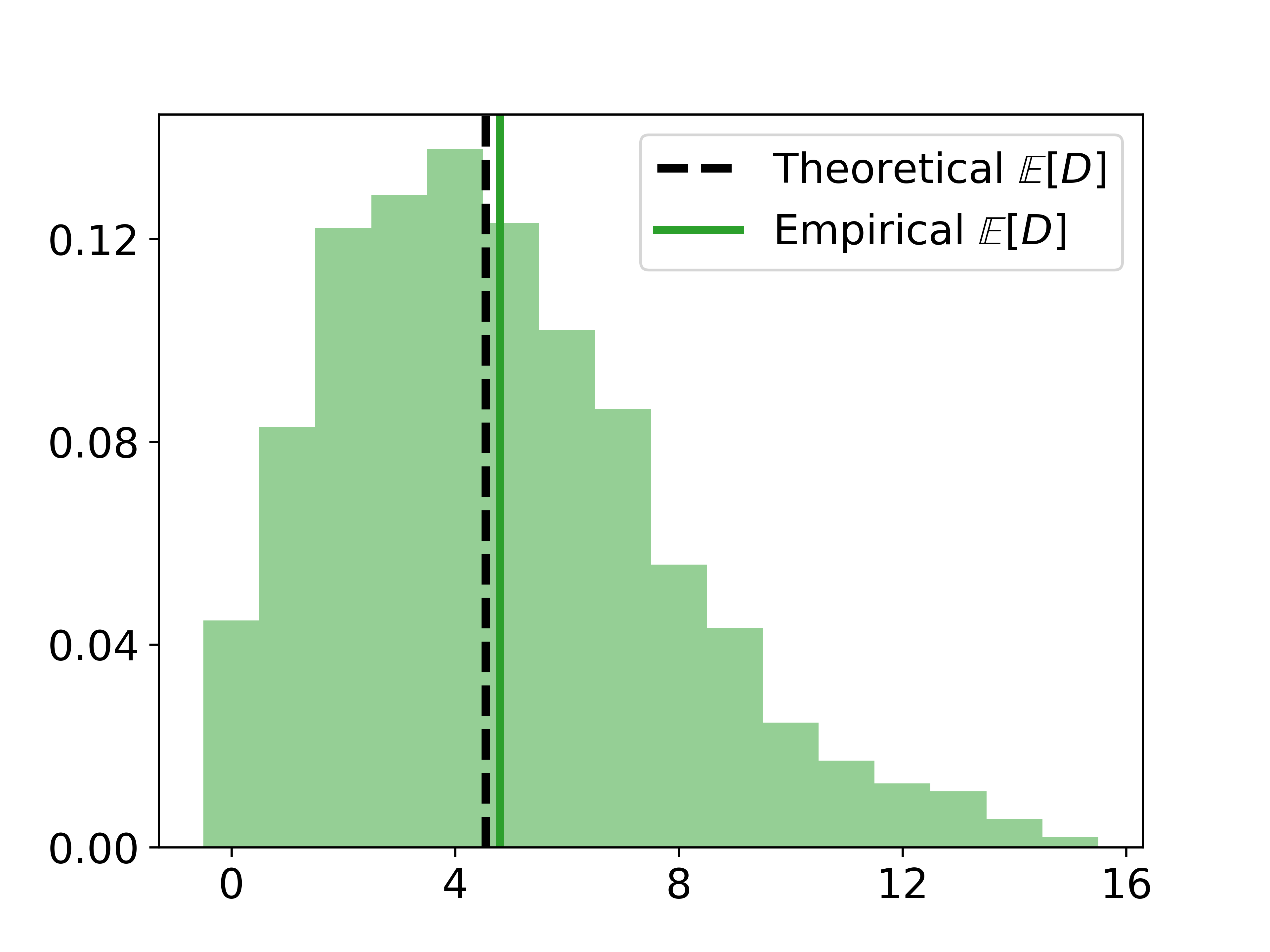}
         \caption{$g(x) \propto e^{-|x|^2/2}, ||g|| = 1$ \\ $\lambda=1, \mu=5$}
         \label{fig:degree_l1m5}
     \end{subfigure}
     \caption{Degree distribution of $\cG_\cV$ sampled on a torus of size $2 \times 10^3$. The solid line is the value of the empirical expected degree, whereas the dotted line is the value of the theoretical expected degree numerically computed from the expression in Proposition \ref{p:edegree}. The intensities $\lambda$ and $\mu$ are different in Fig. \ref{fig:degree_l5m1} and \ref{fig:degree_l1m5}.}
        \label{fig:degree_D0}
\end{figure}

\begin{figure}[p]
    \centering
     \begin{subfigure}[b]{0.5\textwidth}
         \centering
         \includegraphics[width=\textwidth]{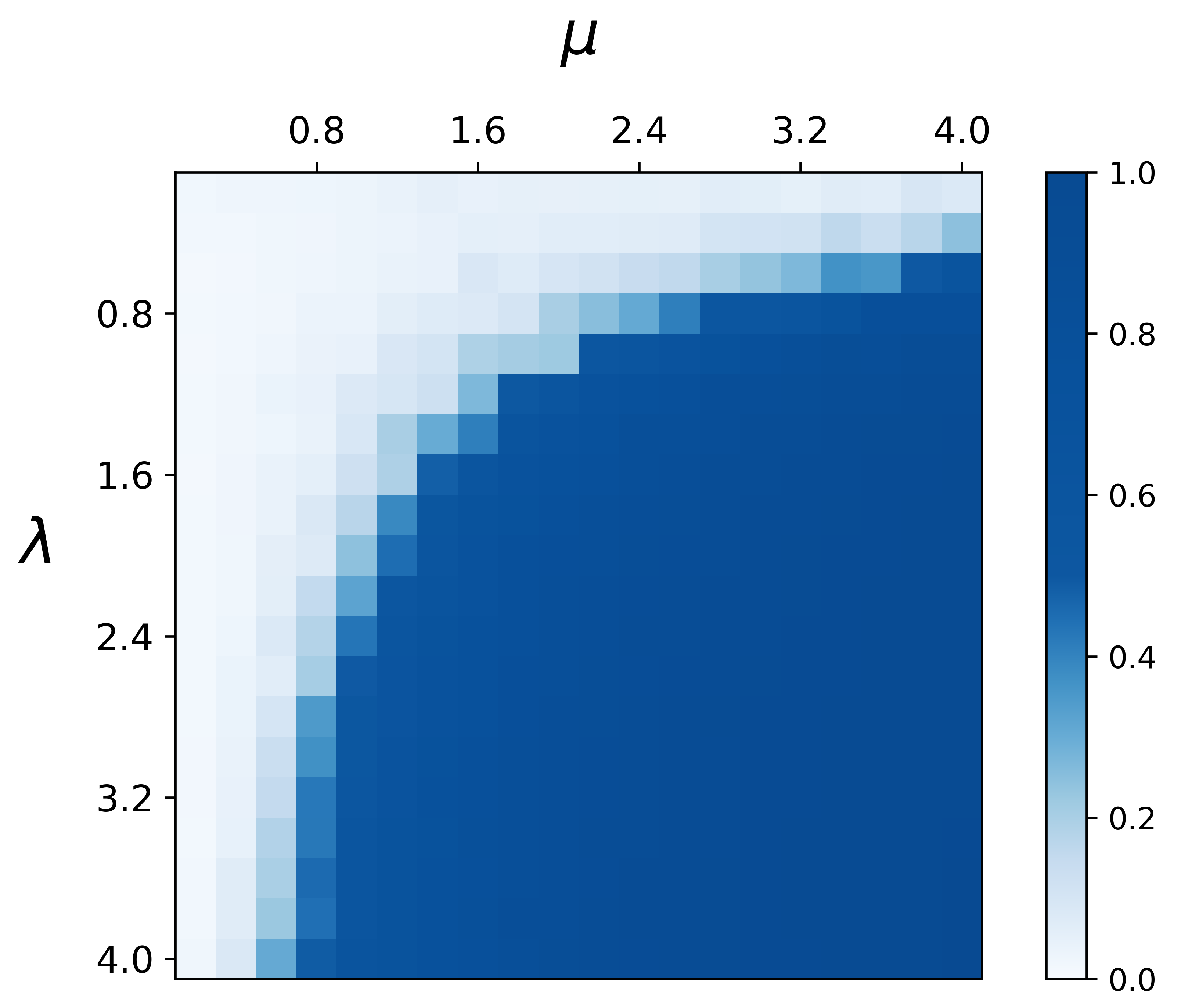}
     \end{subfigure}
     \caption{Percolation phases for $\cG_\cV$, simulated on the torus of size $10^3$ using $g(x) \propto \frac{1}{2\pi}e^{-|x|^2/2}$. The value in each square is the proportion of vertices in the largest component of the graph, obtained as an average over 10 samples: in the blue region the proportion of vertices in the largest component is large (percolation); in the white region the proportion is small (no percolation).}
        \label{fig:perc_standard}
\end{figure}

\begin{figure}[p]
    \centering
    \begin{subfigure}[b]{0.6\textwidth}
         \centering
         \includegraphics[width=\textwidth]{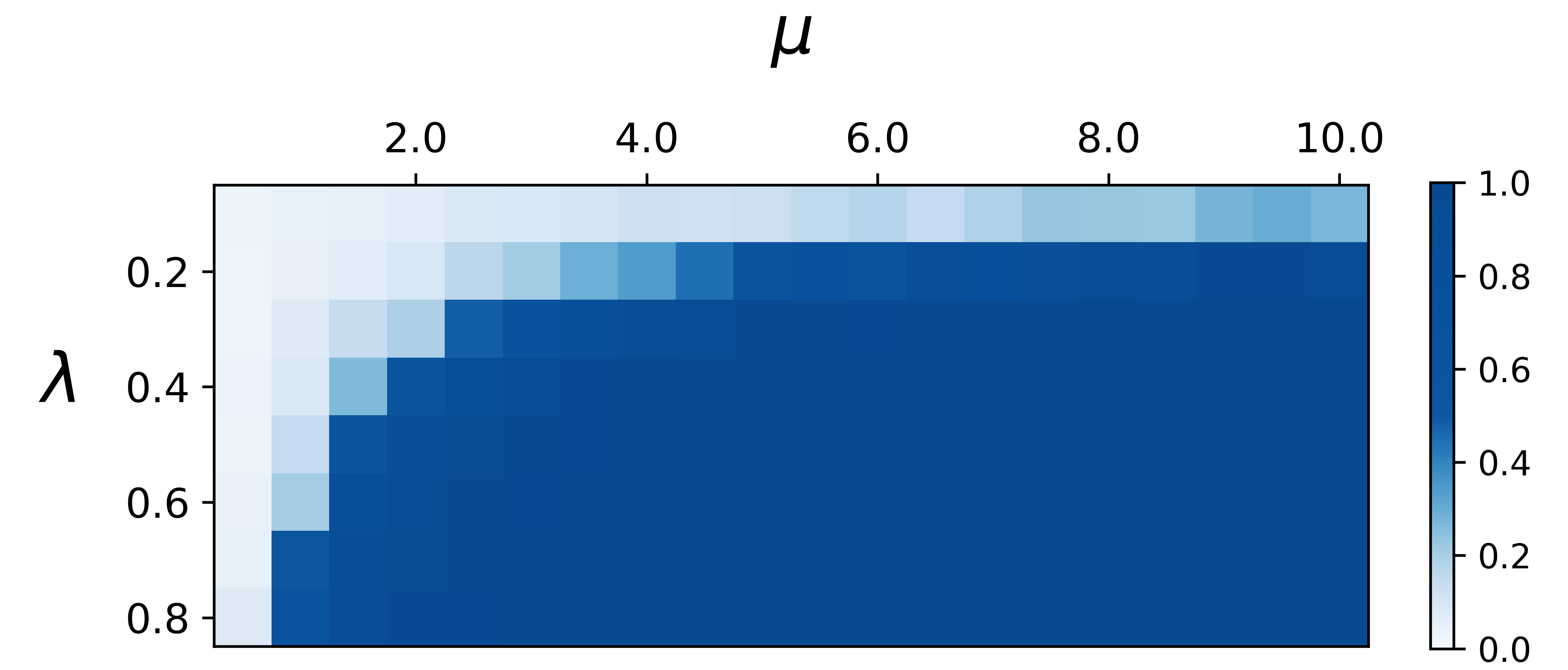}
         \caption{$g(x) \propto e^{-|x|^2/2}$}
         \label{fig:perc_support_unbounded}
     \end{subfigure}
     \\
     \vspace{0.5cm}
     \begin{subfigure}[b]{0.6\textwidth}
         \centering
         \includegraphics[width=\textwidth]{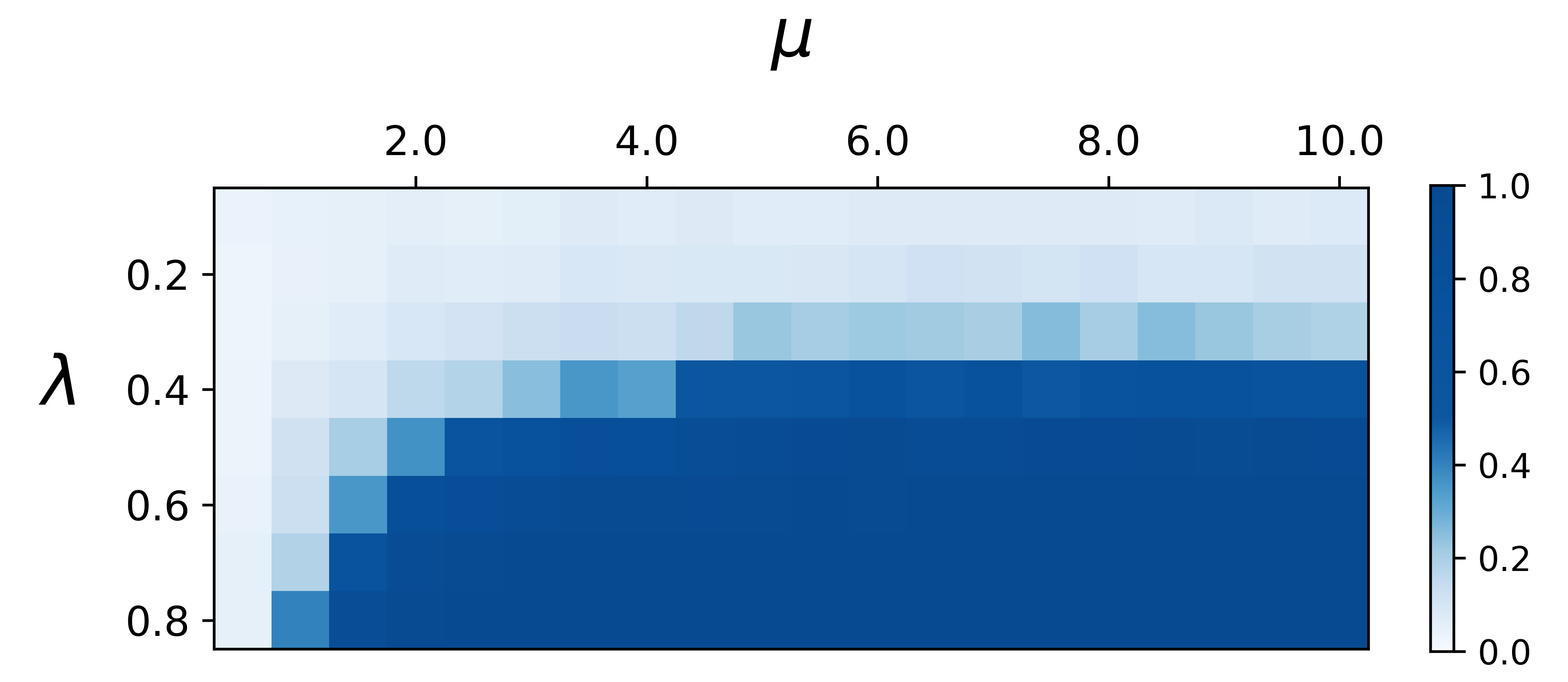}
         \caption{$g(x) \propto \mathbbm{1}_{\{|x|<1\}}$}
         \label{fig:perc_support_bounded}
     \end{subfigure}
     \caption{Percolation phases for $\cG_\cV$, simulated on the torus of size $10^3$ averaging the proportion of vertices in the largest component over 10 samples of the graph. In Fig. \ref{fig:perc_support_unbounded} $g$ has unbounded support; in Fig. \ref{fig:perc_support_bounded} the support of $g$ is bounded.}
        \label{fig:perc_support}
\end{figure}

\pagebreak

\textbf{Funding information}

This work was carried out during a research visit of R.\ Michielan at Stockholm University spring 2023, funded by the Erasmus+ KA131 European Mobility project. The work of M.\ Deijfen is supported in part by the Swedish Research Council.

\end{document}